\documentclass[11pt,twoside]{amsart}

\usepackage{amsmath,amssymb
}
\usepackage{amsfonts}
\usepackage{amsthm,amscd}
\usepackage{amsmath}
\usepackage{amssymb}
\usepackage{amstext}
\usepackage{color}
\usepackage{latexsym}
\usepackage{amscd,graphics}
\usepackage{enumerate}

\usepackage{tikz}
\usetikzlibrary{calc}
\usetikzlibrary{decorations.pathreplacing,angles,quotes}

\usepackage{float}

\usepackage{bbm}
\usepackage{empheq}
\usepackage{mathtools}
\PassOptionsToPackage{hyphens}{url}
\usepackage{hyperref}
\hypersetup{colorlinks = false, urlcolor = blue, bookmarksopen = true}

\usepackage{geometry}
\geometry{hmargin=2.95 cm, vmargin=3.63 cm}

\allowdisplaybreaks

\newcommand{\cA}{\mathcal{A}}

\newcommand{\cC}{\mathcal{C}}
\newcommand{\cD}{\mathcal{D}}

\newcommand{\cM}{\mathcal{M}}
\newcommand{\cN}{\mathcal{N}}
\newcommand{\cO}{\mathcal{O}}
\newcommand{\cP}{\mathcal{P}}

\newcommand{\cS}{\mathcal{S}}

\newcommand{\cU}{\mathcal{U}}

\newcommand{\hP}{\mathring{\cP}}


\newcommand{\bN}{\mathbbm{N}}

\newcommand{\bR}{\mathbbm{R}}

\newcommand{\bS}{\mathbbm{S}}
\newcommand{\bT}{\mathbbm{T}}





\newcommand{\tmax}{\tau_{\max}}
\newcommand{\tmin}{\tau_{\min}}

\newcommand{\musrb}{\mu_{\tiny{\mbox{SRB}}}}

\newcommand{\ve}{\varepsilon}

\newcommand{\diam}{\mbox{diam}}
\newcommand{\Bil}{{\rm Bil}_D}
\newcommand{\BilFH}{{\rm Bil}^{\rm FH}_D}

\newcommand{\intd}{\,{\rm d}}

\newcommand{\Lip}{{\rm Lip}}
\newcommand{\Hol}{{\rm H\ddot{o}l}_{\alpha}}

\DeclareMathOperator{\weakto}{\rightharpoonup}
\newcommand{\Fix}{{\rm Fix} \,}

\newcommand{\comments}[1]{} 

\newtheorem{proposition}{Proposition}[section]
\newtheorem{lemma}[proposition]{Lemma}

\newtheorem{theorem}[proposition]{Theorem}

\newtheorem{corollary}[proposition]{Corollary}

\theoremstyle{remark}

\theoremstyle{definition}

\numberwithin{equation}{section}

\hfuzz=5pt
\vfuzz=5pt

\begin{document}

\title[Existence of MME]{A direct proof of the existence of MME\\ for finite horizon Sinai billiards}

\author{J{\'e}r{\^o}me Carrand}
\address{Centro di Ricerca Matematica Ennio De Giorgi, SNS,
56100 Pisa, Italy}
\email{jerome.carrand@sns.it}

\date{\today}
\begin{abstract}
The Sinai billiard map $T$ on the two-torus, i.e., the periodic Lorentz gaz, is a discontinuous map. Assuming finite horizon and another condition we introduce -- namely \emph{negligible singularities} -- we prove that the metric pressure map associated with the billiard map $T$ is upper semi-continuous, as well as the compactness of the set of $T$-invariant measures. In particular, for the potentials $g \equiv 0$ and $g = -h_{\rm top}(\Phi^1) \tau$, we recover the recent results of the existence of measures of maximal entropy (MME) for both the billiard map and flow $\Phi^t$, due to Baladi and Demers for $T$, jointly with Carrand for $\Phi^t$. We prove that the negligible singularities condition is generic among the billiard table with $C^{3+\alpha}$ boundary, with respect to the $C^{3+\alpha}$ topology. For finite horizon Sinai billiards, we provide bounds on the defect of upper semi-continuity of the metric pressure map and on the topological tail entropy. Assuming that singularities are negligible and sparse recurrence hold, we deduce the equidistribution of the periodic orbits with respect to the unique MME. We provide examples of billiard tables satisfying both conditions.
\end{abstract}
\thanks{The author is grateful to Viviane Baladi, Mark Demers, David Burguet and Yuri Lima for useful discussions and comments. 
Research supported by the European Research Council (ERC) under the European Union's Horizon 2020 research and innovation programme (grant agreement No 787304).}
\maketitle

\section{Introduction}

\subsection{Equilibrium States in Dynamical Systems}
Taking inspiration from physics, more specifically thermodynamic, where ``nature tends to maximize free energy'', it is tempting to investigate the existence of maximizing measures (or \emph{states} in physics terminology) for 
\begin{align}\label{eq:pressure_map}
\mu \mapsto P_{\mu}(T,g) \coloneqq h_\mu(T) + \int_M g \intd\mu \, ,
\end{align}
where $\mu$ ranges over the set $\cM(M,T)$ of Borel probability measures on $M$ invariant under some transformation $T:M \to M$, $h_\mu(T)$ is the Kolmogorov-Sinai entropy of $(T,\mu)$, and $g : M \to \mathbbm{R}$ is some fixed function called \emph{potential} -- in the physical analogy, $\int_M -g \, \mathrm{d}\mu$ corresponds to the internal energy of the state $\mu$. Maximal measures are called equilibrium states.

This problem has been introduced by Ruelle in the case of finite rank lattices \cite{ruelleBook0,ruelleBook}, where a particle lies at each vertex. The rank one case, assuming translation symmetry of states, corresponds to a subshift of finite type. There, for any smooth potential $g$ (e.g. H\"older), there exists a unique equilibrium state $\mu_g$, and it enjoys strong statistical properties (exponential mixing, CLT) \cite{BowenBook}. Furthermore, $\mu_g$ can be constructed by joining the left and right eigenvectors with maximal eigenvalue of a twisted Ruelle transfer operator.

More general transformations can be studied: through the use of coding with Markov partitions, the case of Anosov or Axiom A maps $T$ can be carried out \cite{BowenBook}. In this case, for any H\"older potential, the uniquely associated equilibrium state is Bernoulli. Note that the process of coding can be bypassed by studying a twisted Ruelle transfer operator acting this time on anisotropic Banach spaces \cite{baladi18book}. Although  this strategy is usually very technical, it can lead to strong statistical properties (exponential mixing, CLT, ASIP) -- provided that the operator has, for example, good spectral properties -- and has the advantage to be applicable to more general situations. 

Finally, note that if $T$ is uniformly hyperbolic on a compact manifold $M$, then it is expansive \cite[Corollary 6.4.10]{katok1997introduction}, which in turn implies that $\mu \mapsto h_\mu(T)$ is upper semi-continuous \cite[Theorem 8.2]{Walters82ergth}, giving existence for $\mu_g$ for any continuous potential $g$ since $\cM(M,T)$ is compact. For continuous maps $T$, the defect of upper semicontinuity can be bounded by the topological tail entropy $h^*(T)$ \cite{misiurewicz1973diffeomorphism}. Uniqueness can be deduced from expansiveness and the so called \emph{specification property} \cite{bowenUniqueness}.

\subsection{The case of Sinai Billiards}

In this paper we are interested in the case where $T:M \to M$ is the collision map associated to a finite horizon Sinai billiard (see definition below). 

A Sinai billiard on the torus is (the quotient modulo $\mathbbm{Z}^2$, for position, of) the periodic planar
Lorentz gas (1905) model for the motion of a single dilute electron in a metal. The scatterers
(corresponding to atoms of the metal) are assumed to be strictly convex (but not necessarily
discs). Such billiards have become fundamental models in mathematical physics. 

Assuming finite horizon (see definition below), many equilibrium states for the billiard map (resp. for the billiard flow) have already been studied, such as the SRB measure \cite{chernov2006chaotic} which enjoys exponential mixing \cite{Y,DZ1} (resp. \cite{BDL2018ExpDecay}), the measure of maximal entropy, constructed in \cite{BD2020MME} (resp. \cite{Ca1, BCD}) enjoys polynomial mixing \cite{DK}, equilibrium states associated to piecewise-H\"older potentials \cite{Ca1, BCD} which are Bernoulli, and finally equilibrium states with geometric potentials (which are not H\"older) \cite{BD2} which are exponentially mixing. Most of the constructions -- those of \cite{BD2020MME,Ca1,BCD,BD2} -- or the study of the equilibrium states -- as for \cite{DZ1, BDL2018ExpDecay} -- relie on the use of a transfer operator acting on specific anisotropic Banach spaces. In \cite{BCD,BD2020MME,BD2,Ca1,LOP24}, the studied equilibrium states are proved to be unique. In the construction performed in \cite{BD2020MME, Ca1, BCD}, an additional assumption on the billiard table is made, namely \emph{sparse recurrence to singularities}, that is 
\begin{align}\label{eq:sparse_recurrence}
P_{\rm top}(T,g) - \sup g > s_0 \log 2,
\end{align}
where $P_{\rm top}(T,g)$ is the topological pressure of $g$ under the transformation $T$, and $s_0 \in (0,1)$ is defined in the next subsection.

Bounded complexity is a more restrictive assumption than sparse recurrence (used in \cite{BD2020MME,BCD}). Indeed, for $g \equiv 0$ (respectively, for $g=-h_{\rm top}(\Phi^1) \tau$, where $\tau$ is the time of first return map), the left hand side of \eqref{eq:sparse_recurrence} reduces to $P_{\rm top}(T,g)- \sup g = h_{\rm top}(T) > 0$ (resp. $P_{\rm top}(T,g)- \sup g = h_{\rm top}(\Phi^1) \min \tau > 0$), but Demers and Korepanov proved in \cite{DK} that if the complexity is bounded, then $\inf_{n_0,\varphi_0} s_0(n_0,\varphi_0)=0$. According to \cite{toth}, bounded complexity is a generic property among finite horizon Sinai billiards.

We say that a billiard table has \emph{negligible singularities} when its singular set is a null set with respect to every invariant measure. Using again the result due to Demers and Korepanov, this condition is less restrictive than bounded complexity. It is unclear how it relates to sparse recurrence.
\medskip

The first half of this paper is dedicated to prove that for for generic billiard tables with finite horizon, its pressure map is upper semi-continuous for any piecewise continuous potential. In Section~\ref{sect:bounded_complexity}, we show that for any finite horizon Sinai billiard with negligeable singularities and any piecewise continuous potential, the map \eqref{eq:pressure_map} is upper semi-continuous. In Section~\ref{sect:closed}, we prove that for any finite horizon Sinai billiard, $\cM(M,T)$ is compact. In particular for $g \equiv 0$ and $g = -h_{\rm top}(\Phi^1) \tau$ (see \cite{Ca1}), we recover the existence of the MME for the collision map and the billiard flow respectively. Furthermore, we prove that the variational principle is satisfied. In Section~\ref{sect: typicality of NS} we prove that the negligible singularities condition is generic with respect to the $C^{3+\alpha}$ topology. 

In the second half of the paper, we study the defect of upper semi-continuity of the pressure map for general finite horizon billiard tables as well as the topological tail entropy. In Section~\ref{sect:regularity_and_consequences}, we provide a bound the defect of upper semi-continuity, similar to the ones obtained in \cite{ITV22,EKP15,KP17,burguet2024,velozo2017} in other contexts, as well as its consequences, such that the equidistribution of the periodic orbits with respect to the MME (assuming both sparse recurrence and negligible singularities). In Section~\ref{sect:tail_entropy} we provide an upper bound on the topological tail entropy of the billiard map. Finally, in Section~\ref{sect: examples}, we give some examples of billiard tables satisfying both sparse recurrence and negligible singularities. 

The problem of uniqueness is not covered by this paper. In \cite{LOP24}, it is shown -- without requiring sparse recurrence -- that for any fixed piecewise H\"older potential, among all associated equilibrium states, at most one is adapted.

\subsection{Precise statement of results}

Formally, a Sinai billiard table $Q$ on the two-torus $\mathbbm{T}^2$ is a set $Q = \mathbbm{T}^2 \smallsetminus B$, with $B = \bigcup_{i=1}^D B_i$ for some finite number $D \geqslant 1$ of pairwise disjoint closed domains $B_i$, called scatterers, with $\mathcal{C}^3$ boundaries having strictly positive curvature -- in particular, the scatterers are strictly convex. The billiard flow $\Phi^t : \Omega \to \Omega$ is the motion of a point particle travelling at unit speed in $Q$ with specular reflections off the boundary of the scatterers. Identifying outgoing collisions with incoming ones in the phase space, that is $\Omega = Q \times \mathbbm{S}^1 / \sim$, the billiard flow is continuous. However, the grazing collisions -- those tangential to scatterers -- give rise to singularities in the derivative \cite{chernov2006chaotic}. The Sinai billiard map $T : M \to M$ --~also called collision map~-- is the first return map of the single point particle to the scatterers, that is $M = \partial Q \times [-\pi /2, \pi /2]$. Because of the grazing collisions, the Sinai billiard map is a discontinuous map.

As in \cite{BD2020MME,Y}, we say that the billiard table $Q$ has finite horizon if the billiard flow does not have any trajectories making only tangential collisions -- in particular, this implies that the return time function $\tau$ to the scatterers is bounded. All Sinai billiards in this paper are assumed to have finite horizon. In this case, $\Phi^t$ is a suspension flow over $T$, under $\tau$.

Define $\cS_0 = \{ (r, \varphi) \in M \mid | \varphi | = \pi/2 \}$ the set of grazing collisions, $\cS_{\pm n} = \bigcup_{i=0}^n T^{\mp i} \cS_0 $ the singular set of $T^{\pm n}$, and $\cS = \bigcup_{i=-\infty}^{+\infty}$ the set of all singularities. Notice that $\cS_{n}$ (resp. $\cS_{-n}$) is made of a finite number of decreasing (resp. increasing) curves in the $(r,\varphi)$ coordinates, whereas $M$ is a two dimensional manifold with boundary $\cS_0$. Let $\cM_{-k}^n$ be the partition of $M \smallsetminus ( \cS_{-k} \cup \cS_n)$ into maximal connected components (which are all open sets of $M$). Furthermore, define $\cP$ the partition of $M$ into maximal connected components on which both $T$ and $T^{-1}$ are continuous, and $\cP_{-k}^n = \bigvee_{i=-k}^n T^{-i} \cP$. As in \cite{BD2020MME}, the topological entropy of $T$ is defined as $h_* \coloneqq \lim_{n} \tfrac{1}{n} \log \# \cP_0^n$.
 We say that a potential $g$ is $\cM_{-k}^n$-continuous if $g$ is bounded and $g|_A$ is continuous for each $A \in \cM_{-k}^n$. We simply say that a potential is piecewise continuous if it is $\cM_{-k}^n$-continuous for some $k,n \geqslant 0$.

The complexity of a billiard is a sequence $(K_n)_{n\geqslant 1}$, where each $K_n$ is defined as the maximal number of curves of $S_n$ intersecting at some common point. Chernov \cite{chernov01} proved that for any finite horizon Sinai billiard, there exists a constant $K$ such that $K_n \leqslant K \, n$. A linear growth of the complexity can be achieved if there exists a periodic orbit with a grazing collision. In a recent seminar \cite{toth}, T\`oth announced that for generic billiard table, the complexity is bounded.

In this paper, we introduce a weaker condition, namely \emph{Negligible Singularities}. A billiard table satisfies this condition whenever $\mu(\cS)=0$ for all $T$-invariant measure $\mu$. 

The first half of the present paper -- that is, Sections~\ref{sect:bounded_complexity},~\ref{sect:closed} and~\ref{sect: typicality of NS} -- is devoted to the following result.
\begin{theorem}\label{thm:main_existence_generic}
For any finite horizon Sinai billiard with negligible singularities, and any piecewise-continuous potential $g : M \to \mathbbm{R}$, the map
\begin{align*}
\mu \in \cM(M,T) \mapsto h_\mu(T) + \int g \intd\mu
\end{align*}
is upper semi-continuous, and $\cM(M,T)$ is compact with respect to the weak-$^*$ topology. Thus equilibrium states exist and they all satisfy the variational principle.\\
In particular, $g \equiv 0$ and $g = - h_{\rm top}(\phi_1)\tau$ admit equilibrium measures. In other words, $T$ and $\Phi^t$ have MME.\\
Furthermore, finite horizon Sinai billiard table with negligible singularities form a dense $G_\delta$ set within the set of finite horizon tables with respect to the $C^3$ topology.
\end{theorem}
The upper semi-continuity statement in the above theorem follows from Proposition~\ref{prop:USC of KS entropy}. The compactness of $\cM(M,T)$ is proved in Proposition~\ref{prop: closed set}. The relation between $g = - h_{\rm top}(\phi_1)\tau$ and the MME of the Sinai billiard flow is proved in \cite[Corollary~2.6]{Ca1}. The genericness is proved in Proposition~\ref{prop: U_k are open and dense}.

\medskip

In order to state our second result, we need to quantify the frequency of return of an orbit in a neighbourhood of $\cS_0$. More precisely, for any $n_0 \geqslant 1$ and $0 < \varphi_0 < \pi/2$, define 
\begin{align}
s_0(n_0,\varphi_0) \coloneqq \frac{1}{n_0} \sup_{M} \sum_{i=0}^{n_0-1} \mathbbm{1}_{ ( |\varphi| > \varphi_0 ) } \circ T^i,
\end{align}
that is, the maximal frequency of $\varphi_0$-grazing collisions among $n_0$ consecutive collisions.

Finally, we recall the formal definition of the topological tail entropy (as in \cite{burguet}). Let $d(x,y)$ be the Euclidean metric between $x$ and $y$ if they belong to the same connected component $M_i$ of $M = \bigsqcup_{i=1}^D M_i$, and otherwise fix $d(x,y) = 10 \max_{1 \leqslant i \leqslant D} \diam(M_i)$. Denote $d_n$ the $n$-th Bowen distance, that is 
\begin{align*}
d_n(x,y) \coloneqq \max_{0 \leqslant i < n} d(T^i x, T^i y),
\end{align*} 
and $B(x,\ve,n)$ the open ball centred at $x$ of radius $\ve$ for the $d_n$ metric. A set $X \subset Y$ ($Y \subset M$) is said to be a $(n, \delta )$-spanning subset of $Y$ if $Y \subset \bigcup_{x \in X} B(x,\delta,n)$ and a $(n, \delta)$-separated subset of $Y$ if for all distinct $x, y \in X$, $d_n(x,y) \geqslant \delta$. We use the notation $r(n, \delta , Y )$ (respectively, $ s(n, \delta, Y)$) for the minimal cardinality of a $(n, \delta)$-spanning subset of a set $Y \subset M$ (respectively, the maximal cardinality of a $(n, \delta )$-separated subset of $Y$). Now, define the topological tail entropy to be 
\begin{align}\label{eq:topo_tail_entropy}
h^*(T) \coloneqq \lim_{\ve \to 0} \lim_{\delta \to 0} \limsup_{n \to +\infty} \frac{1}{n} \sup_{x \in M} r(n,\delta, B(x,\ve,n)).
\end{align}

The second half of the paper, namely Sections~\ref{sect:regularity_and_consequences} and~\ref{sect:tail_entropy}, is devoted to the proof of the following result.
\begin{theorem}\label{thm:main_2}
For any finite horizon Sinai billiard table:

\noindent i) $\limsup_{\nu \rightharpoonup \mu} P_{\nu}(T,g) \leqslant P_{\mu}(T,g) + \mu(\cS) \left( P_{\rm top}(T,g) - P_{\mu_\cS}(T,g) \right) $, for all $\mu \in \cM(M,T)$ and all $\cM_0^1$-piecewise continuous potential $g$,

\noindent ii) $h^*(T) \leqslant \left( 3+ 2\left\lfloor \frac{\max \tau}{\min \tau} \right\rfloor \right) s_0 \log 2 K$, where $K$ is such that $K_n \leqslant Kn$ for all $n \geqslant 1$.

In particular, if the singularities are negligible then i) implies the upper semi-continuity of the pressure map (and of the entropy map in the case $g \equiv 0$). And if the complexity is bounded then ii) implies that $h^*(T)=0$.
\end{theorem}
The above theorem will follows from Corollary~\ref{prop:defect of USC} for i) and from Corollary~\ref{prop: top tail entropy bound} for ii). Concerning the negligible singularities case, i) can be deduced directly. For the bounded complexity case, ii) follows from \cite[Proposition 5.6]{DK}.

\begin{corollary}
For any finite horizon Sinai billiard table:

\noindent i) If there is a maximizing sequence of measures for the entropy, then there exists measures of maximal entropy;

\noindent ii) The Kolmogorov--Sinai entropy is upper semi-continuous at every ergodic measure;

\noindent iii) If sparse recurrence and negligible recurrence holds, the periodic orbits equidistribute with respect to the unique measure of maximal entropy.
\end{corollary}

The corollary follows from Corollary~\ref{corol: max sequence} (in the case $g \equiv 0$) for \textit{i)}, from Corollary~\ref{corol:USC_at_ergodic_measures} for \textit{ii)} and from Corollary~\ref{corol: equidist of per orbits} for \textit{iii)}. Existence of billiard tables satisfying both sparse recurrence and negligible singularities is carried out in Section~\ref{sect: examples}.

\section{Negligible Singularities case}\label{sect:bounded_complexity}

In this section we prove the upper semi-continuity of \eqref{eq:pressure_map} under negligible singularities assumption. We begin with an intermediate result. Since $\cP$ is a generating partition, we begin by proving the continuity of each $\mu \mapsto \frac{1}{n} H_\mu(\cP_l^n)$ in Lemma~\ref{prop:continuity static entropy}, where 
\begin{align*}
H_\mu(\cA) \coloneqq - \sum_{A \in \cA} \mu(A) \log \mu(A)
\end{align*}
denotes the static entropy of a partition $\cA$. For this, we will use that when the singularities are negligible, $\mu(\partial A)=0$ for all $A \in \cP_l^n$, all $n,l \geqslant 0$ and all $\mu \in \cM(M,T)$. We deduce the upper semi-continuity of \eqref{eq:pressure_map} in Proposition~\ref{prop:USC of KS entropy}.

\begin{lemma}\label{prop:continuity static entropy}
For any finite horizon Sinai billiard with negligible singularities and for any $n,\ell \geqslant 0$, the map $\mu \mapsto \frac{1}{n} H_\mu(\cP_{-\ell}^n)$ is continuous on $\cM(M,T)$.
\end{lemma}

\begin{proof}
Fix some $n,\ell \geqslant 0$. We start by proving that for all $A \in \cP_{-\ell}^n$, $\partial A$ has measure zero. Notice that for all $A \in \cP_{-\ell}^n$, $\partial A \subset \cS_{-\ell-1} \cup \cS_{n+1} \subset \cS$  (see \cite[Lemma~3.3]{BD2020MME}).

Let $\mu_k \rightharpoonup \mu$ in $\cM(M,T)$ as $k \to \infty$. Let $A \in \cP_{-\ell}^n$. By the above, $\mu(\partial A)=0$. Applying Portmanteau theorem, we thus get $\lim_{k \to \infty} \mu_k(A) = \mu(A)$. Since $\cP_{-\ell}^n$ is a finite partition, and by continuity of $x \mapsto -x \log x$, we get the desired result.  
\end{proof}

We can now prove the upper semi-continuity of \eqref{eq:pressure_map}.

\begin{proposition}\label{prop:USC of KS entropy}
Assuming negligible singularities, then for any $\cM_{-\ell}^n$-piecewise continuous potential $g : M \to \mathbbm{R}$ the map $\mu \mapsto h_\mu(T) + \int g \,\mathrm{d}\mu$ is upper semi-continuous on $\cM(M,T)$.
\end{proposition}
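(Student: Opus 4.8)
The plan is to deduce upper semi-continuity of the pressure functional from the already-established continuity of the normalized static entropies $\mu \mapsto \tfrac{1}{n} H_\mu(\cP_{-\ell}^n)$ (Proposition \ref{prop:continuity static entropy}), together with the fact that $\cP$ is a generating partition, so that $h_\mu(T) = \lim_n \tfrac{1}{n} H_\mu(\cP_0^{n-1})$ and in fact $h_\mu(T) = \inf_n \tfrac{1}{n} H_\mu(\cP_0^{n-1})$ by subadditivity of $n \mapsto H_\mu(\cP_0^{n-1})$. An infimum of continuous functions is upper semi-continuous, so $\mu \mapsto h_\mu(T)$ is upper semi-continuous on $\cM(M,T)$; adding $\mu \mapsto \int g\, \mathrm{d}\mu$ preserves this provided that last map is continuous (or at least upper semi-continuous).

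First I would record the generator step: since every atom of $\cP$ is an open set on which both $T$ and $T^{-1}$ are continuous and the boundary $\partial \cP$ is contained in $\cS_{-1} \cup \cS_1$, which has $\mu$-measure zero for every $\mu \in \cM(M,T)$ by the argument in the proof of Proposition \ref{prop:continuity static entropy}, the partition $\cP$ separates points up to $\mu$-null sets and $\bigvee_{i \in \Z} T^{-i}\cP$ generates the Borel $\sigma$-algebra mod $\mu$; hence the Kolmogorov--Sinai theorem gives $h_\mu(T) = h_\mu(T,\cP) = \lim_n \tfrac{1}{n} H_\mu(\cP_0^{n-1}) = \inf_n \tfrac{1}{n} H_\mu(\cP_0^{n-1})$. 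Then, for fixed $n$, Proposition \ref{prop:continuity static entropy} (with $\ell = 0$) says $\mu \mapsto \tfrac{1}{n} H_\mu(\cP_0^{n-1})$ is continuous, so the pointwise infimum over $n$ is upper semi-continuous.

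Next I would handle the potential term. Write $g = \sum$ over the finitely many atoms $A \in \cM_{-k}^n$ of $g\mathbbm{1}_A$ with $g|_A$ continuous and bounded. Since the complexity is bounded, $\mu(\partial A) = 0$ for each such atom and every $\mu \in \cM(M,T)$ (same reasoning: $\partial A \subset \cS_{-k-1}\cup \cS_{n+1}$ has measure zero), so if $\mu_j \rightharpoonup \mu$ then $\int g\, \mathrm{d}\mu_j \to \int g\, \mathrm{d}\mu$: indeed one can approximate each $g|_A$ uniformly by continuous functions on $M$ and control the error by $\|g\|_\infty\, \mu_j(\text{small neighbourhood of }\partial A)$, which goes to zero by the Portmanteau theorem applied to the closed neighbourhoods of $\partial A$. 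Thus $\mu \mapsto \int g\, \mathrm{d}\mu$ is continuous, and the sum $\mu \mapsto h_\mu(T) + \int g\, \mathrm{d}\mu$ of an upper semi-continuous and a continuous function is upper semi-continuous.

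The main obstacle is the second step: justifying the weak-$*$ continuity of $\mu \mapsto \int g\, \mathrm{d}\mu$ for a merely piecewise-continuous (discontinuous) potential. The point is that weak-$*$ convergence only controls integrals of genuinely continuous functions, so one must exploit that the discontinuity set of $g$ is contained in finitely many singular curves which are universally null for invariant measures — this is exactly where bounded complexity enters a second time — and then run a uniform-approximation-plus-Portmanteau argument. The entropy part, by contrast, is soft once the generator property and Proposition \ref{prop:continuity static entropy} are in hand.
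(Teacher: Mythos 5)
Your proposal is correct and follows essentially the same route as the paper: the entropy term is handled by combining Proposition~\ref{prop:continuity static entropy} with the generator property of $\cP$ (the paper chases $\ve$-neighbourhoods $U_{n,\ve}$ rather than quoting that an infimum of continuous functions is upper semi-continuous, but the content is identical), and the potential term by a continuous cutoff vanishing near the $\mu$-null set $\partial A$ together with the Portmanteau theorem, exactly as in the paper's construction of $\varphi_{A,\delta}$ in \eqref{eq:def phi_A}.
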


\begin{proof}
First, we prove that $\mu \mapsto h_\mu(T)$ is upper semi-continuous. Let $\mu \in \cM(M,T)$, $\ve >0$. By the continuity from Lemma~\ref{prop:continuity static entropy}, we get that for all $n \geqslant 1$, there is a neighbourhood $U_{n,\ve}$ of $\mu$ such that for all $\nu \in U_{n,\ve}$ $\frac{1}{n}H_\mu(\cP_0^n) \geqslant \frac{1}{n} H_\nu(\cP_0^n) -\ve$. Therefore, since $\cP$ is a generating partition,
\begin{align*}
h_\mu(T) &= h_\mu(T, \cP) = \lim_{n \to \infty} \frac{1}{n} H_{\mu}(\cP_0^n) \geqslant \lim_{n \to \infty} \sup_{\nu \in U_{n,\ve}} \frac{1}{n} H_\nu(\cP_0^n) - \ve \geqslant \lim_{n \to \infty} \sup_{\nu \in U_{n,\ve}} h_\nu(T,\cP) - \ve \\
&\geqslant \lim_{n \to \infty} \sup_{\nu \in U_{n,\ve}} h_\nu(T) - \ve = \limsup_{\nu \rightharpoonup \mu} h_\nu(T) - \ve.
\end{align*}
Since this holds for any $\ve >0$, we get the upper semi-continuity of the entropy.

\medskip
We now prove that $\mu \mapsto \int g \,\mathrm{d}\mu$ is continuous. Let $g$ be a $\cM_{-\ell}^n$-continuous potential and $\mu \in \cM(M,T)$. Since \[ \int g \,\mathrm{d}\mu = \sum_{A \in \cM_{-\ell}^n} \int g \mathbbm{1}_A \intd \mu, \] we only need to prove the continuity of $\mu \mapsto \int g \mathbbm{1}_A \, \mathrm{d}\mu$ for any fixed $A \in \cM_{-\ell}^n$. Notice that, since $\partial A$ is compact, we have $\partial A = \bigcap_{\delta > 0} \cN_\delta(\partial A)$, where $\cN_\delta(\partial A)$ is a $\delta$-neighbourhood of $\partial A$. Let $\ve > 0$. Then, since $\mu (\partial A) = 0$, there exists some $\delta > 0$ such that $\mu( \cN_\delta(\partial A) ) < \ve$. Up to decreasing the value of $\delta$, we can also assume that $\mu (\partial \cN_\delta(\partial A) ) = 0$. Let $\mu_k \rightharpoonup \mu$ as $k \to \infty$. Define 
\begin{align}\label{eq:def phi_A}
\varphi_{A,\delta}(x) = \begin{cases} 1 & \text{if } x \in A \smallsetminus \cN_\delta(\partial A), \\
\delta^{-1} d(x, \partial A) & \text{if } x \in A \cap \cN_\delta( \partial A), \\
0 &\text{if } x \notin A,
\end{cases}
\end{align}
and notice that $\varphi_A$ is continuous. Thus, we need to estimate the three terms in
\begin{align}\label{eq:usc potential}
\begin{split}
\left| \int g \mathbbm{1}_A \intd\mu_k - \int g \mathbbm{1}_A \intd\mu \right|
&\leqslant 
\left| \int g \mathbbm{1}_A \intd\mu_k - \int g \varphi_{A,\delta} \intd\mu_k \right| \\
& \,\, + \left| \int g \varphi_{A,\delta} \intd\mu_k - \int g \varphi_{A,\delta} \intd\mu \right| 
+ \left| \int g \varphi_{A,\delta} \intd\mu - \int g \mathbbm{1}_A \intd\mu \right|.
\end{split}
\end{align}
The third term is bounded by $\mu(\cN_\delta(\partial A)) \sup_A |g| \leqslant \ve \sup_A |g|$. For the first term, we use that since $\mu ( \partial \cN_\delta(\partial A)) = 0$, there is some $k_A$ such that for all $k \geqslant k_A$, $\mu_k (\cN_\delta(\partial A)) \leqslant \mu (\cN_\delta(\partial A)) + \ve \leqslant 2\ve$. Thus, the first term in \eqref{eq:usc potential} is bounded by $2 \ve \sup_A |g|$. Now, for the second term, we use that $g \varphi_{A,\delta}$ is continuous on $M$. Thus, from the weak-$^*$ convergence, for all $k \geqslant k_A$ (up to increasing the value of $k_A$), the second term in \eqref{eq:usc potential} is bounded by $\ve$. This prove the continuity of $\mu \mapsto \int g \mathbbm{1}_A \,\mathrm{d}\mu$, and hence the continuity of $\mu \mapsto \int g \,\mathrm{d}\mu$.
\end{proof}

From the upper semi-continuity of $\mu \mapsto P_{\mu}(T,g)$ and the compactness of $\cM(M,T)$ (see Proposition~\ref{prop: closed set}), we deduce the existence of equilibrium states. Furthermore, using Proposition~\ref{prop: averaging_measures}, we can identify the maximal value of the metric pressure.

\begin{proposition}\label{prop: var principle}
For any finite horizon Sinai billiard with negligible singularities and for any piecewise-continuous potential $g : M \to \bR$, the Variational Principle is satisfied, that is $\sup_{\mu \in \cM(M,T)} P_{\mu}(T,g) = P_{\rm top}(T,g)$.
\end{proposition}

The proof follows the same strategies as the one exposed in \cite[Theorem~20.2.4]{katok1997introduction}. We write it for completeness. Note that in the process, we construct an equilibrium state.

\begin{proof}
Let $g$ be a piecewise continuous potential. By \cite{Ca1}, for any invariant measure $\mu$, $P_{\mu}(T,g) \leqslant P_{\rm top}(T,g)$. We now prove the reverse inequality.

Let $0 < \chi_n < 1$ be a sequence converging to $1$. For each $n$, let $E_n$ be a finite set containing exactly one point $x_A$ of each $A \in \cP_0^{n}$ such that $e^{S_n g(x_A)} \geqslant \chi_n \sup_{A} e^{S_n g}$. Let 
\begin{align*}
\nu_n \coloneqq \left( \sum_{x \in E_n} e^{S_n g(x)} \right)^{-1} \sum_{x \in E_n} e^{S_n g(x)} \delta_x, \quad \mu_n = \frac{1}{n} \sum_{i=0}^{n-1} T^i_* \nu_n.
\end{align*}
Since $\cP_0^n$ is a partition, each $x \in E_n$ belongs to a single $A \in \cP_0^{n}$, we get
\begin{align*}
H_{\nu_n}(\cP_0^{n}) + n \int g \intd \mu_n &= H_{\nu_n}(\cP_0^{n}) + \int S_n g \intd \nu_n = \sum_{x \in E_n} \nu_n(\{x\}) ( S_n g(x) - \log \nu_n(\{x\}) ) \\
&= \log \sum_{x \in E_n} e^{S_n g(x)} \geqslant \log \chi_n + \log \sum_{A \in \cP_n} \sup_{A} e^{S_n g} 
\end{align*}
where the last equality is due to the equality case of \cite[Lemma~20.2.2]{katok1997introduction}. Let $q \leqslant n$ and $0 \leqslant r < q$. Let $i_r$ be the smallest integer such that $n - (r + q i_r) < q$. Since 
\begin{align*}
\cP_0^n = \cP_0^r \vee \left( \bigvee_{i=0}^{i_r-1} T^{-qi-r}\cP_0^q \right) \vee T^{r+q i_r} \cP_0^{n-(r+ qi_r)},
\end{align*}
we obtain, by sub-additivity,
\begin{align*}
H_{\nu_n}(\cP_0^n) &\leqslant H_{\nu_n}(\cP_0^r) + \sum_{i=0}^{i_r-1} H_{\nu_n}(T^{-qi-r}\cP_0^q) + H_{\nu_n}(T^{r+q i_r} \cP_0^{n-(r+ qi_r)}) \\
&\leqslant 2q \log \# \cP + \sum_{i=0}^{i_r-1} H_{\nu_n}(T^{-qi-r}\cP_0^q).
\end{align*}
Summing over $r$, we get
\begin{align*}
q H_{\nu_n}(\cP_0^n) &\leqslant 2q^2 \log \# \cP + \sum_{i=0}^{n} H_{\nu_n}(T^{-i}\cP_0^q).
\end{align*}
On the other hand, by concavity of $ \phi : t \mapsto -t \log t$, we get
\begin{align*}
H_{\mu_n}(\cP_0^q) &= \sum_{A \in \cP_0^q} \phi (\mu_n(A)) = \sum_{A \in \cP_0^q} \phi \left( \frac{1}{n} \sum_{i=0}^{n-1} T^i_*\nu_n(A) \right)\\
&\geqslant  \frac{1}{n} \sum_{i=0}^{n-1} \sum_{A \in \cP_0^q} \phi(T^i_*\nu_n(A)) = \frac{1}{n} \sum_{i=0}^{n-1} H_{\nu_n}(T^{-i} \cP_0^q). 
\end{align*}
Putting all of the above estimates together yields
\begin{align}\label{eq: variational_principle}
\frac{ \log \chi_n}{n} + \frac{1}{n}\log \sum_{A \in \cP_0^n} \sup_{A} e^{S_n g} \leqslant \frac{2q}{n} \log \# \cP + \frac{1}{q} H_{\mu_n}(\cP_0^q).
\end{align}
By Proposition~\ref{prop: averaging_measures}, there exist $n_k \to \infty$ and $\mu_g \in \cM(M,T)$ such that $\mu _{n_k} \rightharpoonup \mu_g$. Replacing $n$ by $n_k$ in \eqref{eq: variational_principle} and taking the limit, we obtain using Lemma~\ref{prop:continuity static entropy}
\begin{align*}
P_{\rm top}(T,g) \leqslant \frac{1}{q} H_{\mu_g}(\cP_0^q) + \int g \intd \mu_g.
\end{align*}
Finally, taking the limit $q \to \infty$ yields $P_{\rm top}(T,g) \leqslant P_{\mu_g}(T,g)$.
\end{proof} 

\section{$\cM(M,T)$ is closed}\label{sect:closed}

Since $T$ is not continuous, the transfer operator $T_*$ has no reason, \emph{a priori}, to be continuous on $\cM(M)$ with respect to the weak-$^*$ topology. It is therefore unclear whether the set of fixed points $\cM(M,T)$ of $T_*$ is a closed set or not, nor if the Krylov--Bogolyubov construction of invariant measures holds. Fortunately, the billiard flow is continuous and since it is a suspension flow over $T$ (because of the finite horizon), invariant measures with respect to $T$ are strongly related to the ones of $\Phi^t$. This is the strategy we use.

\begin{proposition}\label{prop: closed set}
For any finite horizon Sinai billiard, the set of $T$-invariant probability measures $\cM(M,T)$ is a closed subset of $\cM(M)$ for the weak-$^*$ topology.
\end{proposition}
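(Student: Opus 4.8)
The plan is to transfer the problem to the billiard flow, as announced in the remark preceding the statement. Since the horizon is finite, $\tau$ is bounded, $\Omega$ is compact, and $\Phi_t:\Omega\to\Omega$ is a continuous flow realised as the suspension of $(M,T)$ under $\tau$. Write $\iota:M\hookrightarrow\Omega$ for the (continuous) inclusion of the section of collision states and $J(x,s)=\Phi_s(\iota(x))$, $0\le s<\tau(x)$, for the suspension chart; the classical correspondence $\Lambda:\mu\mapsto\frac{1}{\mu(\tau)}J_*(\mu\otimes\mathrm{Leb})$ is then a bijection from $\cM(M,T)$ onto $\cM(\Omega,\Phi)$, with inverse given by inducing on the section, and $\mu(\tau)\in[\tmin,\tmax]$ for every $\mu$. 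First I would record that $\cM(\Omega,\Phi)$ is weak-$^*$ closed, hence compact: if $\nu_k\rightharpoonup\nu$ with each $\nu_k$ flow-invariant then, for $f\in C(\Omega)$ and $t\in\real$, $f\circ\Phi_t\in C(\Omega)$, so $\int f\circ\Phi_t\,d\nu=\lim_k\int f\circ\Phi_t\,d\nu_k=\lim_k\int f\,d\nu_k=\int f\,d\nu$.

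Now take $\mu_k\rightharpoonup\mu$ with $\mu_k\in\cM(M,T)$ and set $\bar\mu_k=\Lambda\mu_k\in\cM(\Omega,\Phi)$. Using compactness of $\cM(\Omega)$ and of $[\tmin,\tmax]$, pass to a subsequence along which $\bar\mu_k\rightharpoonup\bar\nu$ and $\mu_k(\tau)\to c$; by the previous paragraph $\bar\nu\in\cM(\Omega,\Phi)$, so $\nu:=\Lambda^{-1}\bar\nu\in\cM(M,T)$. It then suffices to show $\nu=\mu$, for then $\mu=\nu$ is $T$-invariant. To compare $\nu$ and $\mu$ I would test against continuous functions on $\Omega$ built from $\psi\in C(M)$: fix $\ve<\tmin$, a profile $\eta_\ve:[0,\ve]\to[0,1]$ with $\eta_\ve(0)=\eta_\ve(\ve)=0$ and $\int_0^\ve\eta_\ve=1$, and set $g_\ve(\Phi_s\iota(x))=\psi(x)\eta_\ve(s)$ for $0\le s<\ve$ and $g_\ve=0$ elsewhere. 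A one-line computation against the definition of $\Lambda$ gives $\int_\Omega g_\ve\,d(\Lambda\rho)=\frac{1}{\rho(\tau)}\int_M\psi\,d\rho$ for every $\rho\in\cM(M,T)$. If $g_\ve$ were continuous, $\bar\mu_k\rightharpoonup\bar\nu$ would directly give $\frac{1}{\mu_k(\tau)}\int\psi\,d\mu_k\to\frac{1}{\nu(\tau)}\int\psi\,d\nu$; combined with $\int\psi\,d\mu_k\to\int\psi\,d\mu$, with $\mu_k(\tau)\to c$, and with the case $\psi\equiv1$ (which forces $c=\nu(\tau)$), this yields $\int\psi\,d\mu=\int\psi\,d\nu$ for all $\psi\in C(M)$, i.e.\ $\mu=\nu$.

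The obstacle, and the only real difficulty, is that $g_\ve$ is \emph{not} continuous: the flow is tangent to the section $\iota(M)$ precisely along the grazing set $\iota(\cS_0)$, so ``flow back to the section'' — and hence the base/height functions defining $g_\ve$, and the embeddedness of the collar $\Phi_{[0,\ve)}(\iota(M))$ — fails near $\iota(\cS_0)$. To get around this I would prove a quantitative non-concentration estimate: there is $C>0$ such that
\[
(\Lambda\rho)\big(\cN_\delta(\iota(\cS_0))\big)\le C\,\delta\qquad\text{for all small }\delta>0\text{ and all }\rho\in\cM(M,T).
\]
This should follow from the geometry of grazing collisions together with $\rho(\tau)\ge\tmin$: a unit-speed trajectory issued at angular distance $\alpha$ from a grazing collision has velocity angle to tangency $|\alpha-\kappa s|$ at time $s$ (with $\kappa$ the local curvature) and footpoint at distance $\asymp\alpha s+\kappa s^2$ from the scatterer, so it meets $\cN_\delta(\iota(\cS_0))$ only when $\alpha\lesssim\sqrt\delta$ and then only on a time window of length $O(\delta)$; integrating this (together with the analogous contributions from the endpoint and from near-miss passages) against $\rho$ and dividing by $\rho(\tau)\ge\tmin$ gives the bound.

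Granting the estimate, the argument closes. By Tietze, replace $g_\ve$ by a genuinely continuous $\tilde g_\ve$ on $\Omega$, bounded by $\|\psi\|_\infty$, agreeing with $g_\ve$ off an $O(\ve)$-neighbourhood of $\iota(\cS_0)$; then $|\int\tilde g_\ve\,d\bar\rho-\int g_\ve\,d\bar\rho|\le 2\|\psi\|_\infty\,\bar\rho\big(\cN_{O(\ve)}(\iota(\cS_0))\big)=O(\ve)$ uniformly over $\bar\rho\in\{\bar\mu_k\}_k\cup\{\bar\nu\}$ (using the estimate for the $\mu_k$ and for $\nu$). Feeding $\tilde g_\ve\in C(\Omega)$ into $\bar\mu_k\rightharpoonup\bar\nu$, then letting $k\to\infty$ and $\ve\to0$, the two $O(\ve)$ errors vanish and one recovers $\frac{1}{\nu(\tau)}\int\psi\,d\nu=\frac{1}{c}\int\psi\,d\mu$ for every $\psi\in C(M)$; taking $\psi\equiv1$ gives $c=\nu(\tau)$, whence $\nu=\mu$ and $\mu\in\cM(M,T)$. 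The crux is thus entirely the non-concentration estimate near $\iota(\cS_0)$ — equivalently, a uniform bound on the time billiard orbits spend near grazing — while all the suspension-flow and measure-theoretic facts invoked are standard.
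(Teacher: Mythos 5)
Your proposal follows the same overall route as the paper's proof: pass to the suspension flow, use that the set of $\Phi_t$-invariant measures is weak-$^*$ closed because the flow is continuous, and identify the limit by testing against product functions $\psi(x)\rho(t)$ supported in a collar of the section. The one substantive difference is precisely the point you flag as ``the only real difficulty'': the paper simply asserts that $\psi(x)\rho(t)$ descends to a continuous function on $\Omega$, whereas you observe that it does not --- it is discontinuous along the short forward (and backward) flow of $\iota(\cS_0)$, since an arbitrarily small perturbation of a trajectory through a grazing collision may miss the scatterer altogether, which changes the base point and the elapsed time by a definite amount, sending the test function to $0$. This matters exactly when the limit measure charges $\cS_0$, which can happen for a general finite-horizon table (e.g.\ a periodic orbit with a grazing collision); in the bounded-complexity setting of Section~2 every invariant measure gives $\cS_0$ zero mass and the issue disappears, but Proposition~\ref{prop: closed set} is stated in general. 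Your repair --- a non-concentration bound $(\Lambda\rho)(\cN_\delta(\iota(\cS_0)))\leqslant C\delta$ uniform over $\rho\in\cM(M,T)$, followed by a continuous replacement of the test function --- is therefore a genuine and necessary addition, not a cosmetic one, and the limiting argument you build on top of it is correct. The estimate itself is only sketched and should be written out; the cleanest route is to show that for each fixed $x$ the Lebesgue measure of $\{t\in[0,\tau(x)):\Phi_t(\iota(x))\in\cN_\delta(\iota(\cS_0))\}$ is at most $C\delta$ (a unit-speed segment with constant velocity $v$ can only meet the $\delta$-neighbourhood of the arc of $\partial Q$ whose tangent direction is $\delta$-close to $v$, a set of diameter $O(\delta)$ by the strictly positive curvature, and it does so for time $O(\delta)$ per near-tangency, with a uniformly bounded number of near-tangencies per flight thanks to the finite horizon); dividing by $\rho(\tau)\geqslant\min\tau$ then gives the claim without even using invariance of $\rho$. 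With that lemma in place your argument is complete and, on this point, more careful than the one in the text.
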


\begin{proof}
Let $\mu_n \in \cM(M,T)$ be a sequence converging to some $\mu_\infty \in \cM(M)$. Then, in the coordinates of the suspension, $\bar \mu_n \coloneqq (\int_M \tau \,\mathrm{d}\mu_n)^{-1} \mu_n \otimes \lambda$ is a sequence of $\Phi^t$-invariant probability measures, where $\lambda$ is the Lebesgue measure in the direction of the flow. Up to taking a subsequence, we can assume that $\bar \mu_n$ converges to some $\bar \mu$. Furthermore, since the billiard flow is continuous, $\bar \mu$ is a flow invariant measure. 

In particular, there exists a $T$-invariant measure $\mu$ such that $\bar \mu = (\int_M \tau \,\mathrm{d}\mu)^{-1} \mu \otimes \lambda$. We now prove that $\mu_\infty = \mu$.

Consider $\psi \in C^0(M)$ and a smooth function $\rho : \mathbbm{R} \to \mathbbm{R}_+$ supported in $[- \min \tau /2 , \min \tau /2 ]$ such that $\int_\mathbbm{R} \rho \, \mathrm{d}\lambda >0$. Consider a fundamental domain containing $M \times [- \min \tau /2 , \min \tau /2 ]$ of $\Omega \cong ( M \times \mathbbm{R} )/ \sim$ where $\sim$ is such that
\begin{align*}
(x,t) \sim (y,u) \Leftrightarrow \exists n \geqslant 0, (y,u)=(T^n x, t - \sum_{i=0}^{n-1}\tau(T^ix)) \text{ or } (x,t)=(T^n y, u - \sum_{i=0}^{n-1}\tau(T^iy)).
\end{align*}
On this domain, $\varphi(x,t) = \psi(x)\rho(t)$ is well defined and descends into a continuous function on $\Omega$, also called $\varphi$. Thus
\begin{align}\label{eq:conv}
\frac{\int_M \psi \intd\mu_n}{\int_M \tau \intd\mu_n} \int_\mathbbm{R} \rho \intd\lambda = \bar \mu_n (\varphi)  \xrightarrow[n \to \infty]{} \bar \mu(\varphi) = \frac{\int_M \psi \intd\mu}{\int_M \tau \intd\mu} \int_\mathbbm{R} \rho \intd\lambda.
\end{align}
Now, since $\mu_n(\varphi)$ converges to $\mu_\infty(\varphi)$, and $\int_\mathbbm{R} \rho \, \mathrm{d}\lambda >0$, we get from \eqref{eq:conv} that for all $\psi \in C^0(M)$, if $\mu_\infty(\psi) \neq 0$ then 
\begin{align*}
\frac{\mu(\tau)}{\mu_n(\tau)} \xrightarrow[n \to \infty]{} \frac{\mu(\psi)}{\mu_\infty(\psi)},
\end{align*}
and if $\mu_\infty(\psi) = 0$, then $\mu(\psi) = 0$. Therefore, there is a constant $c \geqslant \tau_{\min}/\tau_{\max} > 0$ such that for all $\psi \in C^0(M)$, $\mu(\psi) = c \, \mu_\infty(\psi)$. Since both $\mu$ and $\mu_\infty$ are probability measures, it must be that $c=1$, and so $\mu_\infty = \mu$ is invariant by $T$. Hence, $\cM(M,T)$ is a closed set.
\end{proof}

The following proposition will only be used in the proof Proposition~\ref{prop: var principle}.

\begin{proposition}\label{prop: averaging_measures}
Let $\nu_n$ be a sequence in $\cM(M)$, and define $\mu_n = \frac{1}{n} \sum_{i=0}^{n-1} T_*^i \nu_n$. Then, any accumulation point of the sequence $\mu_n$ belongs to $\cM(M,T)$.
\end{proposition}

\begin{proof}
Let $\nu_n \in \cM(M)$ and define $\mu_n$ as in the proposition. Let $n_k$ be a strictly increasing sequence of integers and $\mu_\infty$ be such that $\mu _{n_k} \rightharpoonup \mu_\infty$. As in the proof of Proposition~\ref{prop: closed set}, we define for each $n$, $\bar \mu_n \coloneqq \left( \int \tau \, \intd \mu_n \right)^{-1} \mu_k \otimes \lambda$. Note that $\int \tau \, \intd \mu_n = \int S_n \tau \intd\nu_n \geqslant n \tmin$.

Fix $t \in \bR$. From the suspension coordinates, it follows that for all $\psi \in C^0(\Omega,\bR)$
\begin{align*}
|(\Phi^t)_* \bar \mu_n (\psi) - \bar \mu_n(\psi)| \leqslant \frac{2 t}{n \tmin} | \psi |_{C^0}
\end{align*}
Therefore, any accumulation point of $\bar \mu_n$ is $\Phi^t$-invariant. Hence, up to extracting a second time, we can assume that $\bar \mu_{n_k}$ converges to some $\bar \mu = (\int \tau \intd \mu)^{-1} \mu \otimes \lambda$, where $\mu \in \cM(M,T)$ since $\bar \mu$ is invariant by the flow. We now prove that $\mu_\infty = \mu$.

Let $\psi \in C^0(M,\bR)$, $\rho \in C^0(\bR,\bR)$ and $\varphi(x,t)=\psi(x)\rho(t)$ as in the proof of Proposition~\ref{prop: closed set}. Using the exact same argument, we obtain a constant $c \geqslant \tmin / \tmax > 0$ such that for all $\psi \in C^0(M,\bR)$, $\mu(\psi) = c \mu_\infty(\psi)$. Since $\mu$ and $\mu_\infty$ are probability measures, $c=1$ and hence $\mu_\infty$ is $T$-invariant.
\end{proof}

\section{Typicality of Negligible Singularities condition}\label{sect: typicality of NS}

Assuming the genericness of the bounded complexity assumption, the genericness of the negligible singularities assumption follows from the next proposition.

\begin{proposition}
If a Sinai billiard table has bounded complexity, then its singularities are negligible.
\end{proposition}

\begin{proof}
Let $n_0 \geqslant 1$, $\varphi_0 < \pi /2$ and $\mu \in \cM(M,T)$. Therefore  
\begin{align*}
\mu(\cS_0) \leqslant \mu(\{ |\varphi| > \varphi_0 \}) = \frac{1}{n_0} \int \sum_{i=0}^{n_0-1} \mathbbm{1}_{ \{ |\varphi| > \varphi_0 \} } \circ T^i \intd\mu \leqslant \frac{1}{n_0} \sup_{M} \sum_{i=0}^{n_0-1} \mathbbm{1}_{ \{ |\varphi| > \varphi_0 \} } \circ T^i = s_0(n_0,\varphi_0).
\end{align*}
Now, Demers and Korepanov proved in \cite[Proposition~5.6]{DK} that if the complexity is bounded, then $\inf_{(n_0,\varphi_0)} s_0(n_0,\varphi_0)=0$. Hence $\mu(\cS_0)=0$. By invariance of $\mu$, we get that $\mu(\cS)=0$.
\end{proof}

Since the problem of genericness of the bounded complexity assumption is still (officially) open, we now provide a complete proof of the genericness of the negligible singularities condition. First, identify the set of all Sinai billiard tables with $D$ scatterers and no corner nor cusp by $\Bil$ with the families $(f_i)_{1 \leqslant i \leqslant D} \in C^{3+\alpha}(\bS^1,\bT^2)$, up to permutation of the indexes, such that for all $1 \leqslant i \leqslant D$ \\
\noindent
i) Each connected component of the lift of $f_i(\bS^1)$ in $\bR^2$ is the boundary of a convex set; \\
\noindent
ii) $|f'_i| >0$ is constant, $|f''_i| \neq 0$, $f_i$ turns counter-clockwise and $f_i(\bS^1)$ does not self intersect; \\
iii) For all $i \neq j$, $f_i(\bS^1) \cap f_j(\bS^1) = \emptyset$.

\medskip
We will often simply write $Q$ for an element of $\Bil$, where $Q = \bT^2 \smallsetminus \bigsqcup_{i=1}^D B_i$ and the boundary of each $B_i$ is parametrized by $f_i$. Now consider $\BilFH$ the subset of $\Bil$ made of billiard tables with finite horizon, in the sense that there are no trajectory making only grazing collisions. The sets $\Bil$ and $\BilFH$ are thus naturally endowed with the $C^3$ topology.

We first prove that $\BilFH$ is an open subset of $\Bil$ with respect of the $C^{3+\alpha}$ topology.

\begin{proposition}\label{prop: FH is open}
$\BilFH$ is an open subset of $\Bil$ with respect of the $C^{3+\alpha}$ topology.
\end{proposition}
In order to prove this proposition, we need the following lemmas (the first of which\footnote{The statement of Lemma~\ref{lemma: control_on_flow} is strongly inspired from a presentation given by T\'oth \cite{toth}.} will be crucial for the rest of this section).

\begin{lemma}\label{lemma: control_on_flow}
Let $Q_0 \in \Bil$, $x \in \mathring{Q}_0 \times \bS^1 \subset \bT^2 \times \bS^1 $, $\ve >0$ and $t > 0$. Then, there exist $\delta_1,\delta_2 > 0$ such that for all $Q_1 \in \Bil \cap B_{C^{3+\alpha}}(Q_0,\delta_1)$, and all $y \in  ( \mathring{Q}_1 \times \bS^1 ) \cap B(x,\delta_2)$, there exists a continuous, piecewise affine function $h_y : [0,t] \xrightarrow{\sim} [0,t]$, $\ve$-close to the identity (in the $C^0$ topology), such that
\begin{align*}
d \left( \Phi^s_0(x),\Phi^{h_y(s)}_1(y) \right) < \ve, \, \forall s \in [ 0, t ],
\end{align*}
where $\Phi_{0}$ and $\Phi_1$ are respectively the billiard flows associated to the tables $Q_0$ and $Q_1$.
\end{lemma}

\begin{proof}
Let $Q_0 \in \Bil$, $x \in \mathring{Q}_0 \times \bS^1 \subset \bT^2 \times \bS^1 $, $\ve >0$ and $t > 0$. Denote by $\Omega_0$ the phase space of $\Phi_0$. Since $\Phi_0$ is continuous on the compact set $\Omega_0 \times [0,t]$, it is uniformly continuous. In particular, there is a $\delta_2 > 0$ such that for all $y \in B_{\Omega_0}(x,\delta_2)$, $d_{\Omega_0}(\Phi_0^s(x),\Phi_0^s(y)) < \ve /2$ for all $s \in [0,t]$. Up to decreasing the value of $\delta_2$, we can assume that $B_{\Omega_0}(x,\delta_2) \Subset \mathring{Q}_0 \times \bS^1$. Thus, we now only need to control the distance between $\Phi^s_0(y)$ and $\Phi_1^s(y)$.

By the group property of the flows, we only need to prove Lemma~\ref{lemma: control_on_flow} for any $t > 0$ small enough. For the rest of the proof, we will assume that $0 < t < \min ( \tau_{0,\min} , \diam' Q_0)/3$, where $\diam' Q_0$ is the smallest diameter among the diameter of the largest disc contained in each connected component of $\bT^2 \smallsetminus Q_0$. We now provide finitely many upper bounds on $\delta_1$ such that, if they are all satisfied, Lemma~\ref{lemma: control_on_flow} holds. 

First, assume that $\delta_1  < \min ( t_{0,\min} , d(\pi(x),\partial Q_0 ), \diam' Q_0)/3$, where $\pi : \bT^2 \times \bS^1 \to \bT^2$ is the canonical projection. In that case, for any $Q_1 \in \Bil \cap B_{C^{3+\alpha}}(Q_0,\delta_1)$, we get that 
$\tau_{1,\min} \geqslant \tau_{0,\min}/3 > 0$, $B(x,\delta_2) \subset \mathring{Q}_1 \times \bS^1$ 
and both $\Phi^{[0,t]}_0(y)$ and $\Phi^{[0,t]}_0(y)$ each contains at most one collision, for any $y \in B(x,\delta_2)$. We now consider the four different cases whether collisions occur or not for any fixed $y$.

\noindent
\textit{Case 1:} Assume that $\Phi^{[0,t]}_0(y)$ and $\Phi^{[0,t]}_0(y)$ do not contain any collision. Therefore, the two trajectories coincides. The lemma holds in this case if $h_y$ is the identity.

\noindent
\textit{Case 2:} Assume that $\Phi^{[0,t]}_0(y)$ makes one collision and that $\Phi^{[0,t]}_1(y)$ does not. In order to simplify notations (also for the subsequent cases) define $\gamma_0 : \bS^1 \to \bS^1$ by 
\begin{align*}
\gamma_0,i(r) \coloneqq \frac{f_{0,i}'(r)}{|f_{0,i}'(r)|},
\end{align*}
where $(f_i)=Q_0$. Note that $\gamma_{0,i}$ is a $C^2$-diffeomorphism. For $\varphi_0 \in (0,\pi/2)$ to by specified latter, let
\begin{align*}
F_{0,i}(r,r_0) = \left\langle \gamma_{0,i}(r_{0,i}), \gamma_{0,i}(r) \right\rangle - \cos \left( \frac{\pi}{2} - \varphi_0 \right).
\end{align*}
Note that for each $r \in \bS^1$ and $1 \leqslant i \leqslant D$, there are exactly two solutions $r_{0,i}^{\pm}(r)$ to $F_{0,i}(r,r_{0,i}^{\pm}(r))=0$. We can assume that the triplets $(r_{0,i}^-(r) , r , r_{0,i}^+(r))$ keep the same orientation in $\bS^1$. Since $\varphi_0 \neq \pi/2$, $\frac{\partial F_{0,i}}{\partial r_0} (r,r_{0,i}^{\pm}) \neq 0$, by the implicit function theorem, all $r \mapsto r_{0,i}^{\pm}(r)$ are $C^2$ functions. Let
\begin{align*}
G_{0,i}^{\pm}(r) = \left\langle f_{0,i}(r_{0,i}^{\pm}(r)) - f_{0,i}(r) , \frac{f''_{0,i}(r)}{|f''_{0,i}(r)|} \right\rangle.
\end{align*}
By continuity and compactness, $\delta_{Q_0} \coloneqq \inf \{ G_{0,i}^*(r) \mid r \in \bS^1 , *=-,+ , 1 \leqslant i \leqslant D \} > 0$. Therefore, up to decreasing the value of $\delta_1$, we can assume that $\delta_1 < \delta_{Q_0}$. Thus, for all $Q_1 \in \Bil \cap B_{C^{3+\alpha}}(Q_0,\delta_1)$, the collision in $\Phi^{[0,t]}_0(y)$ is $\varphi_0$-grazing. Now, the distance between $\pi(\Phi_0^s(y))$ and $\pi(\Phi_1^s(y))$ is at most $2t \sin ( \pi/2 - \varphi_0)$. Thus, taking 
\begin{align*}
\pi/ 2 - \varphi_0 < \min \left( \frac{\pi}{2} \frac{\ve}{8t}, \frac{\ve}{8} \right),
\end{align*}
the distances in $\bT^2$ and $\bS^1$ are both at most $\ve/4$. Hence, the lemma holds if $h_y$ is the identity.

\noindent
\textit{Case 3:} Assume that $\Phi^{[0,t]}_1(y)$ makes one collision and that $\Phi^{[0,t]}_0(y)$ does not. Similarly as in Case 2, we define $\gamma_{j,i}(r,\varphi_0)$, $F_{1,i}(r,r_j,\varphi_0)$, $G^{\pm}_{1,i}(r,\varphi_0)$ and $\delta_{Q_1,\varphi_0}$, $j = 0,1$, for any $Q_1 \in \Bil \cap B_{C^{3+\alpha}}(Q_0,\delta_1)$, where we now explicit the dependence on $\varphi_0$. Our goal is to find $\delta_1$ and $\varphi_0$ such that $Q_1 \mapsto \delta_{Q_1,\varphi_0}$ is bounded from below by $\delta_{Q_0,\varphi_0}/2$ on a $\delta_1$-neighbourhood of $Q_0$. 

We start by giving an expansion of the distance between $r^{\pm}_{j,i}(r)$ and $r$ as $\varphi \to \pi /2$. First, notice that this distance can be bounded by above by the analogous quantity in the case where the curvature of the scatterers is constant (and equals the minimum). It follows that
\begin{align*}
d_{\bS^1}( r^{\pm}_{j,i}(r) , r ) \leqslant \sup_{r,i}\frac{|f'_{j,i}|}{|f''_{j,i}(r)|} \left( \frac{\pi}{2} - \varphi_0 \right).
\end{align*}
Fix some $\varphi_0$ to be chosen latter. Then, using that $F^{\pm}_{j,i}(r,r^{\pm}_{j,i}(r,\varphi_0),\varphi_0) = 0$ and expanding $\gamma_{j,i}(r^{\pm}_{j,i}(r))$ around $r$, we get
\begin{align*}
\frac{1}{2} ( r^{\pm}_{j,i}(r) - r )^2 \left( \left\langle \gamma''_{j,i}(r), \gamma_{j,i}(r) \right\rangle + \left\langle \gamma''_{j,i}( \hat r^{\pm}_{j,i}(r) ) - \gamma''_{j,i}(r), \gamma_{j,i}(r) \right\rangle \right) = \cos\left( \frac{\pi}{2} - \varphi \right) -1,
\end{align*}
where $\hat r^{\pm}_{j,i}(r) \in (r, r^{\pm}_{j,i}(r))$. Note that $\left\langle \gamma''_{j,i}(r), \gamma_{j,i}(r) \right\rangle = - \frac{|f''_{j,i}(r)|}{|f'_{j,i}|}$ and that
\begin{align*}
\left| \left\langle \gamma''_{j,i}( \hat r^{\pm}_{j,i}(r) ) - \gamma''_{j,i}(r), \gamma_{j,i}(r) \right\rangle \right|
\leqslant \frac{\Hol f'''_{j,i}}{|f'_{j,i}|} d_{\bS^1}( r^{\pm}_{j,i}(r) , r ) \leqslant \frac{\Hol f'''_{j,i}}{\inf_{r,i}|f''_{j,i}(r)|} \left( \frac{\pi}{2} - \varphi_0 \right).
\end{align*}
Thus,
\begin{align*}
\left| \frac{1}{2}(r^{\pm}_{j,i}(r) - r )^{2} - \frac{1}{2} \frac{|f'_{j,i}|}{|f''_{j,i}(r)|} \left( \frac{\pi}{2} - \varphi_0 \right)^{2} \right| \leqslant  C_{j,i} \, \left( \frac{\pi}{2} - \varphi_0 \right)^{2+\alpha} + C_j \left( \frac{\pi}{2} - \varphi_0 \right)^{4},
\end{align*}
where
\begin{align*}
C_{j,i} = \frac{1}{2} \frac{ \Hol f'''_{j,i}}{\inf_{r,i}|f''_{j,i}(r)|} \quad \text{ and } \quad C_{j} =\frac{2}{4!} \sup_{r,i} \frac{|f'_{j,i}|}{|f''_{j,i}(r)|}
\end{align*}
Now, expanding $f_{j,i}(r^{\pm}_{j,i}(r))$ at $r$, we get that
\begin{align*}
&\left| G^{\pm}_{j,i}(r,r^{\pm}_{j,i}(r),\varphi_0) - \frac{1}{2} \frac{|f'_{j,i}|}{|f''_{j,i}(r)|} \left( \frac{\pi}{2} - \varphi_0 \right)^{2} \right|  \\
&\qquad \leqslant C_{j,i} \left( \frac{\pi}{2} - \varphi_0 \right)^{2+ \alpha} + \frac{1}{6} |f'''_{j,i}|_{C^0} \left( \frac{\pi}{2} - \varphi_0 \right)^{3} + C \left( \frac{\pi}{2} - \varphi_0 \right)^{4} 
\end{align*}
as well as
\begin{align*}
&\left| G^{\pm}_{1,i}(r,\varphi_0) - G^{\pm}_{0,i}(r,\varphi_0) \right| \\
&\qquad \leqslant \frac{1}{2} \left| \frac{|f'_0|}{|f''_0(r)|} - \frac{|f'_1|}{|f''_1(r)|}  \right| \left( \frac{\pi}{2} - \varphi_0 \right)^{2} + (C_{1,i}+C_{0,i}) \left( \frac{\pi}{2} - \varphi_0 \right)^{2+\alpha} + (C_1 + C_0) \left( \frac{\pi}{2} - \varphi_0 \right)^{4} \\
&\qquad \leqslant \bar \ve \left( \frac{\pi}{2} - \varphi_0 \right)^{2},
\end{align*}
where for the last inequality, we used that for all $\bar{\ve} > 0$ there exists $\bar \delta > 0$ such that for all $Q_1 \in B_{C^{3+\alpha}}(Q_0,\bar \delta)\cap \Bil$, 
\begin{align*}
\left| \frac{|f'_0|}{|f''_0(r)|} - \frac{|f'_1|}{|f''_1(r)|}  \right| &< \frac{\bar \ve}{4}, \\
C_{1,i} &\leqslant 2 C_{0,i}, \\
C_1 &\leqslant 2 C_0,
\end{align*}
and for $\varphi_0$ close enough to $\pi /2$ so that
\begin{align*}
3C_{0,i} \left( \frac{\pi}{2} - \varphi_0 \right)^{\alpha} &< \frac{\bar \ve}{4}, \\
3 C_0 \left( \frac{\pi}{2} - \varphi_0 \right)^{2} &< \frac{\bar \ve}{4}.
\end{align*}

Now, fix $\bar \ve = \inf_{r,i}  |f'_{0,i}| / (2|f''_{0,i}(r)|)$. Therefore, for $\varphi_0$ as above and for any $r \in \bS_1$, any $1 \leqslant i \leqslant D$, and any $* \in \{ - , + \}$,
\begin{align*}
G^{*}_{1,i}(r,\varphi_0) &\geqslant G^{*}_{0,i}(r,\varphi_0) - \bar \ve \left( \frac{\pi}{2} - \varphi_0 \right)^{2} \\
&\geqslant \bar \ve \left( \frac{\pi}{2} - \varphi_0 \right)^{2} > 0.
\end{align*}
Up to decreasing the value of $\delta_1$, we can assume that $\delta_1 < \bar \delta$ and $\delta_1 < \bar \ve \left( \frac{\pi}{2} - \varphi_0 \right)^{2}$. In that case, the collision of $\Phi^{[0,t]}_1(y)$ must be $\varphi_0$-grazing. We can now conclude as in Case 2.

\noindent
\textit{Case 4:} Assume that $\Phi^{[0,t]}_0(y)$ and $\Phi^{[0,t]}_1(y)$ both make one collision. By the choice of $t$, the collisions must occur on scatterers with common indexes, say $i_0$. Denote $t_0 \in (0,t)$ (resp.$t_1$) the time at which $\Phi^{[0,t]}_0(y)$ (resp. $\Phi^{[0,t]}_1(y)$) makes a collision. Denote also $r_0$ (resp. $r_1$) such that the collision occurs at $f_{0,i_0}(r_0)$ (resp. $f_{1,i_0}(r_1)$).

Since $f''_{0,i}(r) \neq 0$ for all $r \in \bS^1$ and $1 \leqslant i \leqslant D$, for all $\bar \ve > 0$, there exists $\bar \delta > 0 $ such that $\cN_{\delta}(\partial Q_0)$ does not contain any segment of length more than $\bar \ve$. Up to decreasing the value of $ \bar \delta$, we can assume that $\bar \delta \leqslant \bar \ve$.

For $\bar \ve$ to be chosen latter, assume that $\delta_1 < \bar \delta$. It follows that $|t_0 - t_1| < \bar \ve$, and 
\begin{align*}
d(f_{0,i_0}(r_0),f_{0,i_0}(r_1)) \leqslant d(f_{0,i_0}(r_0),f_{1,i_0}(r_1)) + d(f_{1,i_0}(r_1),f_{0,i_0}(r_1)) \leqslant \bar \ve + \bar \delta \leqslant 2 \bar \ve
\end{align*} 
Computing the length of the arc in $\partial Q_0$ joining $f_{0,i_0}(r_0)$ to $f_{0,i_0}(r_1)$, it comes that
\begin{align*}
d_{\bS^1}(r_0,r_1) \leqslant \frac{2 \bar \ve}{|f'_{0,i_0}|}.
\end{align*} 
Now, in order to estimate the distance between the outgoing trajectory angles, we compute
\begin{align*}
\left| \gamma_{1,i_0}(r_1) - \gamma_{0,i_0}(r_0)  \right| &\leqslant \frac{1}{|f'_{0,i_0}(r_0)|} \left( 2 \Lip(f')d_{\bS^1}(r_0,r_1) + 2d_{C^{3+\alpha}}(Q_0,Q_1) \right) \\
&\leqslant 2 \left ( \frac{|f''_{0,i_0}|_{C^0}}{|f'_{0,i_0}|_{C^0}^2} + \frac{2}{|f'_{0,i_0}|_{C^0}} \right) \bar \ve.
\end{align*}
Thus, fixing $\bar \ve$ small enough, we can assume that the angle between the two outgoing trajectories is $2 d_{\bS^1}( \gamma_{1,i_0}(r_1) , \gamma_{0,i_0}(r_0) ) < \frac{\ve}{4t}$, as well as $\bar \ve \leqslant \ve /4$. 
Finally, defining $h_y : [0,t] \to [0,t]$ by
\begin{align*}
h_y(s) = \begin{cases}
\frac{t_0}{t_1}s, & \text{if } 0 \leqslant s \leqslant t_1, \\
\frac{t-t_0}{t-t_1}s,  & \text{if } t_1 \leqslant s \leqslant t,
\end{cases}
\end{align*}
the collisions of $s \mapsto \Phi^s_0(y)$ and $s \mapsto \Phi^{h_y(s)}_1(y)$ both occur at $s=t_0$. Assuming that $\delta_1 < \bar \ve$ and $d_{C^{3+\alpha}}(Q_0,Q_1) < \delta_1$, we have for $s \leqslant t_0$,
\begin{align*}
d(\Phi^s_0(y), \Phi_1^{h_y(s)}(y)) \leqslant \bar \ve < \ve /2,
\end{align*}
and for $s \geqslant t_0$,
\begin{align*}
d(\Phi^s_0(y), \Phi_1^{h_y(s)}(y)) \leqslant \bar \ve + 2sd_{\bS^1}( \gamma_{1,i_0}(r_1) , \gamma_{0,i_0}(r_0) ) \leqslant \ve /2,
\end{align*}
finishing the proof of the lemma.
\end{proof}

For each $Q \in \Bil$ define $s_0 (Q) = \inf_{(n_0,\varphi_0)} s_0(n_0,\varphi_0)$. The next results characterise the finiteness of the horizon in terms of the value of $s_0$.
\begin{lemma}\label{lemma: characterization FH}
For all $Q \in \Bil$, $Q \in \BilFH$ if and only if $s_0(Q) < 1$. Furthermore, for all $Q \in \BilFH$, there exist $\ve > 0$ and $\delta > 0$ such that $s_0  \leqslant 1- \ve $ on $B_{C^{3+\alpha}}(Q,\delta) \cap \Bil$.
\end{lemma}

\begin{proof}
We begin with the characterisation of $\BilFH$ in terms of $s_0$. 

Let $Q \in \BilFH$. By contradiction assume that $s_0(Q) = 1$. Let $0 < \varphi_k < \pi/2$ be a sequence converging to $\pi /2$ as $k \to \infty$. Therefore, for all $n$ and $k$, there exists $x_{n,k} \in M$ maximizing $s_0(n,\varphi_k) =1$, that is, $x_{n,k}$ makes $n$ consecutive $\varphi_k$-grazing collisions. By compactness, and up to a diagonal extraction, we can assume that for all $n$, $x_{n,k}$ converges to some $x_n \in M$ as $k \to +\infty$. Note that the flow orbit of $x_n$ must be a straight line when restricted to times in $(0, n \tmin)$. In particular $x_n$ makes at least $n \frac{\tmin}{\tmax}$ grazing collisions. By compactness, we can assume that $x_n$ converges to some $x \in M$ as $n \to \infty$. Therefore, for all $t > 0$, $\Phi^{[0,T]}(x)$ is arbitrarily close to a straight line. Hence, it is a straight line. In particular, $x$ makes only grazing collisions, contradicting the finite horizon assumption on $Q$.

Conversely, if $Q \in \Bil \smallsetminus \BilFH$, then there is a point $x \in M$ making only grazing collisions. In particular $s_0(n,\varphi)=1$ for all $n$ and $\varphi$. Hence $s_0(Q) =1$. 

\medskip
Let $Q_0 \in \BilFH$. By contradiction, assume that for all $\ve > 0$ and $k \geqslant 1$, there exists $Q_k^{\ve} \in B_{C^{3+\alpha}}(Q_0, 1/k) \cap \Bil$ such that $s_0(Q_k^{\ve}) > 1 - \ve$. Remark that for all $n_0, \varphi_0$, $s_0(Q_k^{\ve},n_0,\varphi_0) > 1 - \ve$, therefore there exists a point $x \in M_{k,\ve}$ maximizing $s_0(Q_k^{\ve},n_0,\varphi_0)$, thus making at least $(1-\ve)n_0$ grazing collisions among $n_0$ consecutive collisions. In particular, it follows that there is a point $x'$ (belonging to $\{ x ,T_k x , \ldots T_k^{n_0}x \}$) making at least $\frac{n_0 s_0(Q_k,n_0,\varphi_0)}{n_0 ( 1 - s_0(Q_k,n_0,\varphi_0)) +1} \geqslant \frac{1 - \ve}{2 \ve}$ consecutive grazing collisions, as long as $1/n_0 < \ve$.

Fix $t>0$ and chose some $0 < \varphi < \pi/2$ such that
\[ \left| \frac{\pi}{2} - \varphi \right| < \ve \left( \frac{n(n+1)}{2} \frac{\pi}{2} t \right)^{-1}.\]
Therefore, any flow orbit of length at most $t$ making at most $n$ collisions, all of which being $\varphi$-grazing, deviates from a straight line by at most $\ve$, when seen in $\bT^2$.

For each $k$ and $\ve$, let $x_{k,\ve} \in M_{k,\ve}$ be a maximizer of $s_0(Q_k^{\ve},n,\varphi)$. As shown above, there is a point $x'_{k,\ve}$ making at least $\frac{1-\ve}{2 \ve}$ successive $\varphi$-grazing collisions. These points may get arbitrary close to $\partial Q_0$, preventing from using Lemma~\ref{lemma: control_on_flow}. For $k$ large enough so that $1/k < \tmin /3$, there exists $x''_{k,\ve} = \Phi^{-t_{k,\ve}}_{k,\ve}(x'_{k,\ve})$ such that there are no collision between $x''_{k,\ve}$ and $x'_{k,\ve}$, and $d(x''_{k,\ve},\partial Q_0) \geqslant \tmin /3$.
Note that $t_{k,\ve} > 0$ can be chosen to be smaller than 
\[ t' \coloneqq \sqrt{ \frac{\tmin}{3}\left( \frac{\tmin}{3} + \frac{2}{\inf_{r,i} |f_{0,i}''(r)|} \right)},\]
whenever $1/k < \inf_{r,i} |f_{0,i}''(r)| / 2$. By compactness, we can assume that $x''_{k,\ve}$ converges to some $x''_{\ve}$ as $k \to \infty$. Since, $d(x''_{\ve},\partial Q_0) > 0$, we can apply Lemma~\ref{lemma: control_on_flow} to $Q_0$, $x''_{\ve}$, $\ve$ and $t$, and obtain some positive $\delta_1$ and $\delta_2$. Denote $t_\ve = \min \left( t,  \frac{\tmin}{3} \frac{1-\ve}{2\ve} \right)$. Therefore, when $k$ is large enough so that $1/k < \delta_1$ and $d(x''_{k,\ve},x''_{\ve})< \delta_2$, $\pi (\Phi_0^{[0,t_\ve]}(x''_{\ve}))$ deviates from a straight line by at most $2\ve$.

By compactness, let $\ve_n \to 0$ be such that $x''_{\ve_n}$ converges to some $x''$. By uniform continuity of $\Phi_0$ on $[0,t] \times \Omega_0$, $\pi(\Phi_0^{[0,t]})$ coincides with a straight line, that is, $x''$ makes only grazing collisions. In particular, the orbit of $x''$ makes at least $t/\tmax$ consecutive grazing collisions. Since this construction holds for all $t>0$, we get that $s_0(Q_0)=1$, a contradiction.
\end{proof}

\begin{proof}[Proof of Proposition~\ref{prop: FH is open}]
Let $Q_0 \in \BilFH$. Denote by $\tilde \kappa_{\min}$ the minimal value of $|f''_i|$, $1 \leqslant i \leqslant D$, and by $V > 0$ the minimal value of $|f'_i|$, $1 \leqslant i \leqslant D$. By compactness, $\tilde \kappa_{\min} > 0$. Furthermore, by definition of $\BilFH$, $\tmin > 0$. Let $\delta_1 = \min \{ \tilde \kappa_{\min}, V, \tmin \} / 3$. In particular, for all $Q \in B_{C^{3+\alpha}}(Q_0,\delta_1) \cap \Bil$, the scatterers of $Q$ are pairwise disjoint, do not self intersect and are strictly convex sets (not reduced to a point). Now, let $\ve > 0$ and $\delta > 0$ as provided by Lemma~\ref{lemma: characterization FH}. Thus, up to decreasing the value of $\delta$ so that $\delta < \delta_1$, 
\[ B_{C^3}(Q_0, \delta) \cap \Bil \subset \BilFH, \]
finishing the proof.
\end{proof}

Now, given a table $Q \in \BilFH$, we characterize the measures $\mu \in \cM(M,T)$ such that $\mu(\cS) \neq 0$. Since $\cS$ is invariant, the conditional measures $\mu_{\cS}$ and $\mu_{\cS^c}$ are also invariant, and $\mu_{\cS}(S)=1$. Thus, we only need to characterize measures $\mu$ such that $\mu(\cS)=1$. Denote $\cM_{\cS}(M,T)$ the set of those measures.

\begin{lemma}\label{lemma: structure of measures on singularities}
If $\mu \in \cM_{\cS}(M,T)$, then $\mu$ is supported by an at most countable family of periodic orbits, each one making at least one grazing collision.
\end{lemma}

\begin{proof}
Let $\mu \in \cM_{\cS}(M,T)$. Then, by invariance, $\mu(\cS) = 1$ implies $\mu(\cS_0) > 0$. Let $(x_k)_{1 \leqslant k \leqslant k_n}$ be the points in $\cS_n$ where multiple curves intersect. Let 
\begin{align*}
X_n \coloneqq \{ x \in \cS_0 \mid \exists \, 1 \leqslant k \leqslant k_n, \, \exists \, 0 \leqslant j \leqslant n, \, x_k= T^{-j}x \}. 
\end{align*}
Therefore $X_n$ is finite and $W_n \coloneqq \cS_0 \smallsetminus X_n$ is such that the sets $(T^{-i}W_n)_{0 \leqslant i \leqslant n}$ are pairwise disjoint. Let $W \coloneqq \bigcap_{n \geqslant 1} W_n$. Therefore, all the $T^{-n}W$, $n\geqslant 0$ are pairwise disjoint. In particular $\mu(W) = 0$. Finally, note that $\cS_0 \smallsetminus W$ is at most countable.

Therefore, $\mu_{\cS_0}$ is an atomic measure on $\cS_0$. Since $\mu$ is a probability measure, by invariance these atoms must be supported by periodic points. 
\end{proof} 

From Lemma~\ref{lemma: structure of measures on singularities}, it follows immediately that a billiard table has negligible singularities if and only if there are no periodic orbit making grazing collisions. We thus introduce the sets, for any $k \geqslant 3$,
\begin{align*}
\cU_k \coloneqq \{ Q \in \BilFH \mid \bigcup_{i=3}^k \Fix \, T^i \cap \cS_0 = \emptyset \}.
\end{align*}
Therefore, $\bigcap_{k \geqslant 3} \cU_k$ is the set of billiard tables with negligible singularities. The typicality of this condition then follows from the next proposition.

\begin{proposition}\label{prop: U_k are open and dense}
For all $k \geqslant 3$, $\cU_k$ is open and dense in $\BilFH$ with respect to the $C^{3+\alpha}$ topology. In particular, the set of finite horizon Sinai billiard tables with negligible singularities is a dense $G_\delta$ set.
\end{proposition}
Note that billiard tables admitting grazing periodic orbits are suspected to be dense. A partial result in that direction can be found in \cite{osterman2021}.

\begin{proof}
Let $k \geqslant 3$. We first prove that $\cU_k$ is open. Let $Q_0 \in \cU_k$. By contradiction, assume that for all $n \geqslant 1$, there exists $Q_n \in B_{C^{3+\alpha}}(Q_0,1/n)$ such that $Q_n \notin \cU_k$. Denote $\Phi_{n}$ the billiard flow associated with $Q_n$, and $T_n : M_n \to M_n$ its collision map. Therefore, there is a periodic point $x_n \in M_n \subset \bT^2 \times \bS^1$ of period at most $k$ with respect to $T_n$ making at least one grazing collision. For $n$ large enough so that $d_{C^{3+\alpha}}(Q_0,Q_n) < \tmin /3$, denote by $x'_n$ a point along the flow orbit of $x_n$ such that $d(\pi(x'_n),\partial Q_0) \geqslant \tmin /3$. Let $0 <p_n < k \tmax$ be the smallest period of $x'_n$ under $\Phi_{n}$, and $t_n$ be such that $\Phi_{n}^{t_n}(x'_n)$ is a grazing collision. By compactness, up to extracting, we can assume that $x'_n$ converges to some $x' \in \bT^2 \times \bS^1$ such that $d(\pi(x'),\partial Q_0) > 0$, and, up to extracting further, that $p_n$ and $t_n$ converge to $p, t' \in [0, k \tmax]$ respectively. Now, applying Lemma~\ref{lemma: control_on_flow} for $Q_0$, $x'$, $\ve$ and $t=k \tmax$ we obtain some $\delta_1, \delta_2 >0$. For any $n$ large enough, $d_{C^{3+\alpha}}(Q_n,Q_0)< \delta_1$ and $d(x_n,x)< \delta_2$. Therefore
\begin{align*}
d(x',\Phi_{0}^{t_n}(x')) \leqslant d(x,x_n) + d(\Phi_{n}^{t_n}(x_n),\Phi_{0}^{t_n}(x)) \leqslant 2 \ve
\end{align*}
Thus $x$ is a periodic orbit of period $p$ for $\Phi_{0}$ and belongs to $M_0$. Now, since
\begin{align*}
d(\Phi_{0}^{t_n}(x'), M_0) &\leqslant d(\Phi_{0}^{t_n}(x'), \Phi_{n}^{t_n}(x'_n)) + d(\Phi_{n}^{t_n}(x'_n),M_n) + d(M_n,M_0) \\
&\leqslant \ve + 1/n \leqslant 2\ve,
\end{align*} 
and by continuity of $\Phi_{0}$, $\Phi_{0}^{t'}(x')$ is a collision point. This is a grazing collision since the flow trajectory of $x'$ for times near $t'$ is arbitrarily close to segments of length at least $\tmin / 3$. This contradicts $Q_0 \in \cU_k$, and thus there exists $n$ such that $B_{C^{3+\alpha}}(Q_0,1/n) \subset \cU_k$.

\medskip
We now prove the density of $\cU_k$ in $\BilFH$. Let $Q_0 \in \BilFH$. Let $\delta$ be so small that $Q \mapsto \# \cP_{Q}$ is uniformly bounded from above by some finite $C \geqslant 2$. Denote $\delta'=\delta / 2 C^{k+1}$. Given a sequence $Q_n \in \BilFH$, denote  
\begin{align*}
P_n &= \bigcup_{i=3}^k \Fix \, T_n^i \subset M_n \subset \bT^2 \times \bS^1, \\
R_n &= \{ x \in M_n \mid x \in P_n \text{ such that } \cO_n(x) \cap \partial M_n = \emptyset \},
\end{align*}
where $\cO_n(x)$ denotes the orbit of $x$ under $T_n$, the collision map associated with $Q_n$. We now proceed algorithmically by constructing a finite sequence of tables $Q_n$ and sets $R'_n \subset R_n$ with $d_{C^{3+\alpha}}(Q_n,Q_{n+1}) < \delta'$ and $\# R'_n \geqslant n$ until $R'_n = P_n$. These constructions have to stop at some point since $d_{C^{3+\alpha}}(Q_n,Q_0) < \delta$ for all $n \leqslant 2C^{k+1}$, and thus $n \leqslant \# R'_n \leqslant \# P_n \leqslant C^{k+1}$.

Initiate $R'_0$ by setting $R'_0=R_0$. Assume $Q_n$ and $R'_n$ have been constructed. If $R'_n = P_n$, then $Q_n \in \cU_k$ and we stop the construction here. Else, there is a point $x \in P_n \smallsetminus R'_n$. Since $R'_n \subset R_n$, the point $x_n$ is periodic of period at most $k$ with respect to $T_n$ and makes at least one grazing collision. Without loss of generality, we assume that $x$ is a regular collision. Denote $\pi : \bT^2 \times \bS^1 \to \bT^2$ the canonical projection. Write $\cO_n(x_n) \supset \{ y_1, \ldots , y_{i_1}, z_1 , \ldots , z_{i_2} \}$ the points of the orbit where the collisions are grazing, where for all $i$, $\pi(y_i) \in \pi(R'_n)$ and $\pi(z_i) \notin \pi(R'_n)$. Let $U_1$ be a neighbourhood of the $\pi(y_i)$'s in $\partial Q_n \subset \bT^2$ such that $\bar U_1 \cap \left( \pi(\{ z_1, \ldots z_{i_2} \} \cup R'_n) \smallsetminus \pi( \{ y_1 , \ldots , y_{i_2} \}) \right) = \emptyset $. Let $U_2$ be a neighbourhood of the $\pi(z_i)$'s in $\partial Q_n$ such that $\bar U_2 \cap \pi(R'_n) = \bar U_1 \cap \bar U_2 = \emptyset$. We construct $Q_{n+1}$ by perturbing $\partial Q_n$ only on $U_1 \cup U_2$. On $U_2$ we shrink the scatterers so that the $\pi(z_i)$'s are not part of $\partial Q_{n+1}$ and such that $d_{C^{3+\alpha}}(Q_{n+1},Q_n)<\delta'$. Note that the flow trajectory of $x$ is unchanged. The perturbation on $U_1$ needs more care, we proceed as follows.

Let $y$ be a point of $R'_n \smallsetminus \{ y_1 , \ldots , y_{i_1} \}$ such that $\pi(y) \in \pi(R'_n)$. Since the orbit of $y'$ makes only regular collisions, by the hyperbolic fixed point theorem, there is a $\delta_{y} > 0$ such that for any $Q \in B_{C^{3+\alpha}}(Q_n,\delta_{y})$, there corresponds a unique periodic point $y'$ whose orbit  contains only regular collisions, with the same period as $y$ (with respect to the collision maps), and whose orbits with respect to the flows are close together. There are finitely many such points $y$. Let $\delta'$ be the smallest $\delta_{y}$. Up to decreasing the value of $\delta_1$, we can assume $\delta_1 < \delta'$. Finally, pick $Q_{n+1} \in B_{C^{3+\alpha}}(Q_n,\delta_1)$ such that $\partial Q_{n+1}$ coincides with $\partial Q_n$ outside of $U_1 \cup U_2$ and such that the orbit of $x$ in $Q_{n+1}$ does not make grazing collision. Set $R'_{n+1}$ to be the union of the orbit of $x$ and $R'_n$, where the orbits of each $y$ are replaced by the orbits of $y'$. In particular $\# R'_{n+1} = \# R'_n + \# \cO_{n+1}(x) \geqslant \# R'_n + 1$.
\end{proof}

\section{Regularity of the metric pressure and consequences}\label{sect:regularity_and_consequences}

In this section we consider general finite horizon Sinai billiards (which may not have negligible singularities). We prove the bound on the defect of upper semi-continuity of the pressure map in Corollary~\ref{prop:defect of USC} (generalizing Corollary~\ref{prop:USC of KS entropy}) as well as some consequences, such that existence of equilibrium states as soon as there are maximizing sequences (Corollary~\ref{corol: max sequence}), upper semi-continuity of the entropy map at ergodic measures (Corollary~\ref{corol:USC_at_ergodic_measures}) and equidistribution of the periodic orbits (Corollary~\ref{corol: equidist of per orbits}).
 
\subsection{Bound on the defect of upper semi-continuity}\label{subsect:defect_USC}

Before investigating the smoothness of the pressure map, we give a preliminary result estimating the size of eventual discontinuities of each of the \emph{static pressure maps} $\mu \mapsto P_{\mu}(g,\cP_0^n) \coloneqq \frac{1}{n} H_\mu(\cP_0^n) + \int g \intd \mu$. Note that by invariance $P_{\mu}(g,\cP_0^n) \coloneqq \frac{1}{n}  \left( H_\mu(\cP_0^n) + \int S_n g \intd \mu \right)$.

\begin{proposition}\label{prop: diff of static entropies}
For all $\mu \in \cM(M,T)$, all potential $g : M \to \bR$ $\cM_0^1$-continuous, all small enough $\ve >0$ and all large enough $n$, there is a neighbourhood $V_{\ve,n} \subset \cM(M,T)$ of $\mu$ such that for all $\nu \in V_{\ve,n}$,
\begin{align*}
P_{\nu}(g,\cP_0^n) - P_{\mu}(g,\cP_0^n)  &\leqslant \frac{\mu(\cS)}{n} \log \sum_{A \in \cP_0^n} \sup_{A} e^{S_n g} - \int_{\cS} g \intd\mu + R_{\ve,n}(g),
\end{align*}
where
\begin{align*}
R_{\ve,n}(g) = \ve \left( 4 \left| \frac{1}{n} \log \sum_{A \in \cP_0^n} \sup_{A} e^{S_n g} \right| + \sup |g| + 2 \frac{\log \# \cP_0^n}{n} \right) + \frac{2 \log 2 + e^{-1} + 5\ve}{n} + 2 \ve.
\end{align*}
\end{proposition}

To prove the above proposition, we split the contributions to $P_\mu(g,\cP_0^n)$ of a neighbourhood of $\partial \cP_0^n$ and its complement. To do so, we rely on the use of conditional entropy and conditional pressure. Recall that for any two finite partitions $\cA$ and $\cC$, the conditional entropy of $\cA$ with respect to $\cC$ and $\mu$ is defined by
\begin{align*}
H_\mu(\cA \mid \cC) \coloneqq \sum_{C \in \cC} \mu(C) H_\mu(\cA \mid C) \quad \text{with } H_\mu(\cA \mid C) \coloneqq H_{\mu_C}(\cA),
\end{align*} 
where $\mu_C$ is the conditional probability of $\mu$ on $C$. By convention, if $\mu(C)=0$, we set $H_\mu(\cA \mid C) = 0$. Analogously, we also denote 
\begin{align*}
P_{\mu}(g,\cA \mid \cC) \coloneqq \sum_{C \in \cC} \mu(C) P_{\mu}(g,\cA \mid C) \quad \text{with } P_{\mu}(g,\cA \mid C) \coloneqq  \frac{1}{n} \left( H_{\mu_C}(\cA) + \int S_n g \intd \mu_C \right) .
\end{align*}

In order to estimate the contribution away from singularities in the static entropy part, we need the following technical lemma.
\begin{lemma}\label{lemma: diff of x log x}
For all $x, y \in [0,1]$ if $|x-y| \leqslant 1/2$, then $|x\log x - y\log y| \leqslant -|x-y| \log |x-y|$.
\end{lemma}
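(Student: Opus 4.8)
The statement is an elementary real-analysis inequality about the function $f(t) = -t\log t$, and the plan is to reduce it to a monotonicity/convexity argument. Without loss of generality assume $x \geqslant y$, so that $x - y = |x-y| \eqqcolon h \in [0,1/2]$, and we must show $x\log x - y \log y \geqslant h \log h$, i.e. $f(y) - f(x) \leqslant -h\log h = f(h)$ (interpreting $f(0)=0$). First I would dispose of the trivial cases: if $x = y$ the inequality reads $0 \leqslant 0$; if $y = 0$ it reads $-x\log x \leqslant -x \log x$ with $x = h$, again an equality. So assume $0 < y < x \leqslant 1$.

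The key step is to fix the gap $h$ and study $\varphi(y) \coloneqq f(y) - f(y+h)$ as $y$ ranges over $[0, 1-h]$. Computing $\varphi'(y) = f'(y) - f'(y+h) = -\log y - 1 - (-\log(y+h) - 1) = \log\frac{y+h}{y} > 0$, so $\varphi$ is strictly increasing on $[0,1-h]$. Hence $\varphi(y) \leqslant \varphi(1-h) = f(1-h) - f(1) = -(1-h)\log(1-h)$. It therefore suffices to prove the numerical inequality $-(1-h)\log(1-h) \leqslant -h\log h$ for $h \in [0,1/2]$, equivalently $(1-h)\log(1-h) \geqslant h \log h$; since both sides are $\leqslant 0$ this is $\frac{(1-h)\log(1-h)}{h\log h} \leqslant 1$ when $h \in (0,1/2)$. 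Alternatively, set $g(h) = h\log h - (1-h)\log(1-h)$; then $g(1/2) = 0$, and $g'(h) = \log h + 1 + \log(1-h) + 1 = \log(h(1-h)) + 2$, which is positive precisely when $h(1-h) > e^{-2}$; since $h(1-h)$ is increasing on $[0,1/2]$ and equals $1/4 > e^{-2}$ at $h=1/2$, one checks $g$ is increasing near $1/2$ but one must be a little careful for small $h$ — there $g(h) \to 0^-$ as well since both terms vanish. The cleanest route is: for $h\in(0,1/2]$, compare termwise — $(1-h) \geqslant h$ and $\log(1-h) \geqslant \log h$ are both nonnegative-versus... no: $\log(1-h)<0$. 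So instead write $(1-h)\log(1-h) - h\log h$ and note at $h=1/2$ it is $0$; differentiating gives $\log(h(1-h))+2 \cdot(\text{sign bookkeeping})$, and since $h(1-h)\le 1/4$ we get the derivative is $\geqslant \log(h(1-h)) + 2$... I would simply verify $g(h)\le 0$ on $(0,1/2]$ by noting $g(0^+)=0$, $g(1/2)=0$, and $g$ is concave there (as $g''(h) = \frac1h - \frac1{1-h} = \frac{1-2h}{h(1-h)} \geqslant 0$ means $g$ is convex, wait — convex with equal endpoint values at $0$ and $1/2$ forces $g \leqslant 0$ in between). Indeed $g'' \geqslant 0$ on $(0,1/2]$, so $g$ is convex, and $g(0^+) = g(1/2) = 0$, hence $g \leqslant 0$ on $[0,1/2]$, which is exactly $-(1-h)\log(1-h) \leqslant -h\log h$.

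Assembling: $f(y) - f(x) = \varphi(y) \leqslant \varphi(1-h) = -(1-h)\log(1-h) \leqslant -h\log h$, i.e. $x\log x - y\log y \geqslant -|x-y|\log|x-y|$, and since the left side is nonnegative-or-not irrelevant, taking the obvious sign arrangement gives $|x\log x - y\log y| \leqslant -|x-y|\log|x-y|$ (the left side equals $f(y)-f(x)$ which we have just bounded; it is automatically $\geqslant 0$ precisely when... actually one should check $f(y) - f(x)$ can be negative, e.g. near $x$ close to $1$, $f$ is decreasing, so $f(y) > f(x)$ when $y<x$ there — fine, then $|f(y)-f(x)| = f(y)-f(x)$; when $f(y) < f(x)$, i.e. both near $0$ where $f$ increasing, then $|f(y)-f(x)| = f(x) - f(y) = \varphi(y)\cdot(-1)$, but $\varphi(y) > 0$ always, contradiction — so in fact $f(y)\geqslant f(x)$ never fails... no: $\varphi(y) = f(y)-f(y+h) = f(y) - f(x)$, and we proved $\varphi$ increasing with $\varphi(0) = -f(h) < 0$; so $\varphi(y)$ can indeed be negative for small $y$). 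In that case $|f(y)-f(x)| = -\varphi(y) = f(x) - f(y) \leqslant f(h) - f(y) \cdot(\ldots)$ — here I would instead apply the same monotonicity argument to $\psi(y) = f(y+h) - f(y) = -\varphi(y)$, which is \emph{decreasing}, so $\psi(y) \leqslant \psi(0) = f(h) = -h\log h$, handling this case too. The main (and only real) obstacle is this case analysis on the sign of $x\log x - y\log y$ together with the boundary numerical inequality $g(h)\le 0$; both are routine once the convexity observation $g''\geqslant 0$ and the monotonicity of $\varphi$ and $\psi$ are in hand.
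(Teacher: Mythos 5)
Your proposal is correct and follows essentially the same route as the paper: you study the increasing function $y \mapsto f(y)-f(y+h)$ (the paper's $f_\delta$), bound it between its values at $0$ and $1-h$, and close with the endpoint inequality $-(1-h)\log(1-h)\leqslant -h\log h$ proved by the same convexity argument (your $g$ is the negative of the paper's). The writing is more exploratory than it needs to be, but the final argument is sound and complete.
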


\begin{proof}[Proof of Lemma~\ref{lemma: diff of x log x}]
Let $f_\delta(x)= -x\log x + (x+\delta)\log (x+\delta )$, for $0 \leqslant \delta \leqslant 1/2$ and $x \in [0, 1- \delta ]$. Differentiating, we get $f'_\delta(x) = \log ((x+\delta)/x) > 0$ on $(0,1- \delta )$. Thus
\[ \delta \log \delta = f_\delta(0) \leqslant f_\delta \leqslant f_\delta(1-\delta ) = -(1-\delta)\log (1- \delta). \]
Let $g(\delta) = -\delta \log \delta + (1-\delta )\log (1-\delta )$, for $0 \leqslant \delta \leqslant 1/2$. Differentiating twice, we get $g''(\delta) = \frac{2\delta -1}{\delta (1 - \delta )} < 0$ on $(0,1/2)$. Thus $g$ is concave on $[0,1/2]$ and $g(\delta) \geqslant 2\delta g(1/2) + (1- 2\delta )g(0) = 0$. In other words, $-(1-\delta ) \log (1-\delta) \leqslant - \delta \log \delta$. Taking $x \leqslant y$ and $\delta=y-x$ ends the proof.
\end{proof}

\begin{proof}[Proof of Proposition~\ref{prop: diff of static entropies}]
Let $\ve > 0$ and $g : M \to \bR$ a $\cM_0^n$-continuous potential. First, remark that for any partition $\alpha$ of $M$ into two elements, we have on the one hand, for all $\nu \in \cM(M,T)$ and $n \geqslant 1$,
\begin{align*}
H_\nu(\cP_0^n \vee \alpha) = H_\nu(\alpha) + H_\nu(\cP_0^n \mid \alpha),
\end{align*} 
where $0 \leqslant H_\nu(\alpha) \leqslant \log 2$ by \cite[Corollary~4.2.1]{Walters82ergth}, and on the other hand,
\begin{align*}
H_\nu(\cP_0^n \vee \alpha) = H_\nu(\cP_0^n) + H_\nu(\alpha \mid \cP_0^n),
\end{align*}
where, $0 \leqslant H_\nu(\alpha \mid \cP_0^n) \leqslant H_\nu(\alpha) \leqslant \log 2$ by \cite[Theorem~4.3(vi)]{Walters82ergth}. Thus, for any partition $\alpha$ of $M$ with $\#\alpha =2$ and any $T$-invariant measure $\nu$, 
\begin{align*}
\left| P_\nu(g,\cP_0^n) - P_\nu(g,\cP_0^n \mid \alpha) \right| \leqslant \frac{\log 2}{n}.
\end{align*}

Let $\mu \in \cM(M,T)$. By Lemma~\ref{lemma: structure of measures on singularities}, it follows that $\mu(\cS) = \mu(\bigcup_{n \geqslant 0} S_{n})$. Let $n$ be large enough so that $|\mu(\cS) - \mu(\cS_{n+1} \cup \cS_{-1})|$ is less than $\ve$, and less than $ \ve / \sup g$ if $\sup g > 0$.

Let $\delta$ be small enough so that $|\mu(\cS_{n+1} \cup \cS_{-1}) - \mu(\cN_{\delta}(\cS_{n+1} \cup \cS_{-1}))|$ is smaller than $\ve$, and smaller that $\ve / (-\inf g)$ if $\inf g<0$. Notice that, up to decreasing the value of $\delta$, we can assume that $\mu( \partial \cN_{\delta}(\cS_{n+1} \cup \cS_{-1}) \smallsetminus \cS_0) =0$. Now, denote $U = \cN_{\delta}(\cS_{n+1} \cup \cS_{-1})$ and consider the partition $\alpha_{\ve,n} = \{ U , M \smallsetminus U \}$ into two elements. Similarly, let $\delta' < \delta$ and $U' = \cN_{\delta'}(\cS_{n+1} \cup \cS_{-1})$ such that $\mu( \partial U' \smallsetminus \cS_0) =0$ and $ \mu(U \smallsetminus U') < \ve / (n \sup_{M} |g|)$. Finally, let $\varphi \in C^0 (M,\bR)$ be such that $\varphi|_{U^c} \equiv 1$ and $\varphi|_{U'} \equiv 0$.

Therefore, for any $\nu \in \cM(M,T)$,

\begin{align}\label{eq:decomp_with_cond}
P_\nu(g,\cP_0^n) - P_\mu(g,\cP_0^n) \leqslant \frac{2\log 2}{n} + \left( P_\nu(g,\cP_0^n \mid \alpha_{\ve,n}) - P_\mu(g,\cP_0^n \mid \alpha_{\ve,n} ) \right).
\end{align}
We want to estimate the right hand side. First, notice that since for all $A \in \cP_0^n$, $\partial (A \smallsetminus U) \subset \partial U \smallsetminus \cS_0$ has $\mu$-measure zero, there is a neighbourhood $V_A$ of $\mu$ such that for any $\nu \in V_A$,
\begin{align*}
\left| \nu (A \smallsetminus U ) - \mu ( A \smallsetminus U ) \right| \leqslant  \frac{\ve}{\# \cP_0^n}.
\end{align*}
Also, since $\mu( \partial U \smallsetminus \cS_0) =0$ and $\cS_0 = \partial M$, there is a neighbourhood $\tilde V_1$ of $\mu$ such that for any $\nu \in \tilde V_1$,
\begin{align*}
\left| \nu(U) - \mu(U)  \right| \leqslant \ve.
\end{align*}
Since $\mu(\partial( M \smallsetminus U)) =0$ and $\mu( \partial (U \smallsetminus U'))=0$ there is a neighbourhood $\tilde V_2$ of $\mu$ such that for any $\nu \in \tilde V_2$,
\begin{align*}
\left| \nu(M \smallsetminus U) - \mu(M \smallsetminus U)  \right| \leqslant \ve, \quad \text{ and } \left| \nu(U \smallsetminus U') - \mu(U \smallsetminus U') \right| \leqslant \frac{\ve}{n \sup |g|}.
\end{align*}
Finally, since $ \varphi \, S_n g$ is continuous, there is a neighbourhood $\tilde V_3$ of $\mu$ such that for any $\nu \in \tilde V_3$,
\begin{align*}
\left| \int \varphi \, S_n g \intd \mu - \int \varphi \, S_n g \intd \nu \right| \leqslant \ve.
\end{align*}
Define $V_{\ve,n} \coloneqq \tilde V_1 \cap \tilde V_2 \cap \tilde V_3 \cap \bigcap_{A \in \cP_0^n} V_A$. Note that $V_{\ve,n}$ is a neighbourhood of $\mu$.

In the case where $\mu(M \smallsetminus U) > 0$, we will need to consider an even smaller neighbourhood. More precisely, notice that if the value of $\ve$ decreases (as well as $\delta$, accordingly), then $\mu(M \smallsetminus U)$ increases. Thus, if $\mu(M \smallsetminus U_\ve) > 0$ for some $\ve$, there is an $\tilde \ve >0$ such that for all $0 < \ve < \tilde \ve$, $2\ve / \mu(M \smallsetminus U) < 1/2$. Furthermore, there is a neighbourhood $\tilde V_4$ of $\mu$ such that for all $\nu \in \tilde V_4$
\begin{align*}
\left| \frac{\mu(M \smallsetminus U)}{\nu(M \smallsetminus U)} - 1 \right| \leqslant \frac{\ve}{n \sup_{M} |g|}.
\end{align*}
If $\mu(M \smallsetminus U) > 0$, replace $V_{\ve,n}$ by $V_{\ve,n} \cap \tilde V_4$ and take $\ve < \tilde \ve$. 

\medskip

Expanding the second term of the right-hand-side of \eqref{eq:decomp_with_cond} and assuming $\nu \in V_{\ve,n}$, we get
\begin{align}\label{eq:diff_decomp}
\begin{split}
& P_\nu(g,\cP_0^n \mid \alpha_{\ve,n}) - P_\mu(g,\cP_0^n \mid \alpha_{\ve,n} ) \leqslant \left( \mu(U) P_\mu(g,\cP_0^n \mid U) + \nu(U) P_\nu(g,\cP_0^n \mid U) \right) \\
&\qquad\qquad\qquad\qquad\qquad + \left( \mu(M \smallsetminus U) P_\mu(g,\cP_0^n \mid M \smallsetminus U) - \nu(M \smallsetminus U) P_\nu(g,\cP_0^n \mid M \smallsetminus U) \right).
\end{split}
\end{align}
We begin with the contributions of $U$. Note that
\begin{align*}
\mu(U) P_\mu(g,\cP_0^n \mid U) &\geqslant \mu(U) \int \frac{1}{n} S_n g \intd \mu_U = \int_U \frac{1}{n} S_n g \intd \mu \\
&\geqslant \int_{\cS} \frac{1}{n} S_n g \intd\mu - \int_{\cS \smallsetminus ( \cS_{n+1}\cup \cS_{-1})} \frac{1}{n} S_n g \intd\mu + \int_{U \smallsetminus ( \cS_{n+1}\cup \cS_{-1})} \frac{1}{n} S_n g \intd\mu 
\\
&\geqslant \int_{\cS} g \intd \mu  - 2\ve,
\end{align*}
and,
\begin{align*}
\nu(U) P_\nu(g,\cP_0^n \mid U) \leqslant (\mu(\cS) \pm 3 \ve) \frac{1}{n} \log \sum_{A \in \cP_0^n} \sup_{A} e^{S_n g},
\end{align*}
where the $\pm 3 \ve$ stands for $+3\ve$ if the $\log$ term is positive and $-3\ve$ otherwise. Therefore, the contributions of $U$ in \eqref{eq:diff_decomp} can be bounded by 
\begin{align*}
\mu(U) P_\mu(g,\cP_0^n \mid U) + \nu(U) P_\nu(g,\cP_0^n \mid U) \leqslant (\mu(\cS) \pm 3 \ve)\frac{1}{n} \log \sum_{A \in \cP_0^n} \sup_{A} e^{S_n g} - \int_{\cS} g \intd\mu + 2 \ve.
\end{align*}

We now consider the contributions of $M \smallsetminus U$, that is, the second term in \eqref{eq:diff_decomp}. This contribution can be further split as follows
\begin{align}\label{eq: contribution of M-U}
\begin{split}
\nu(M \smallsetminus U) P_\nu(g,\cP_0^n &\mid M \smallsetminus U) - \mu(M \smallsetminus U) P_\mu(g,\cP_0^n \mid M \smallsetminus U) \\
& = \left( \nu(M \smallsetminus U) - \mu(M \smallsetminus U) \right) P_\mu(g,\cP_0^n \mid M \smallsetminus U) \\
&\quad +  \nu(M \smallsetminus U) \left( P_\nu(g,\cP_0^n \mid M \smallsetminus U) - P_\mu(g,\cP_0^n \mid M \smallsetminus U)  \right).
\end{split}
\end{align}
The first term on the right-hand side can be bounded by
\begin{align*}
\left( \nu(M \smallsetminus U) - \mu(M \smallsetminus U) \right) P_\mu(g,\cP_0^n \mid M \smallsetminus U) &=
\ve \left( \left| \frac{1}{n}\log \sum_{A \in \cP_0^n} \sup_{A} e^{S_n g} \right| + \left| \int \frac{1}{n} S_n g \intd \mu _{U^c} \right| \right) \\
&\leqslant \ve \left( \left| \frac{1}{n}\log \sum_{A \in \cP_0^n} \sup_{A} e^{S_n g} \right| + \sup |g| \right),
\end{align*}
whereas the second one is bounded by
\begin{align*}
\nu(M \smallsetminus U) &\left( P_\nu(g,\cP_0^n \mid M \smallsetminus U) - P_\mu(g,\cP_0^n \mid M \smallsetminus U)  \right) \\
&\leqslant \frac{\nu(M \smallsetminus U)}{n} \sum_{A \in \cP_0^n} \left| - \frac{ \mu(A \smallsetminus U) }{ \mu(M \smallsetminus U) } \log \frac{ \mu(A \smallsetminus U) }{ \mu(M \smallsetminus U) } + \frac{ \nu(A \smallsetminus U) }{ \nu(M \smallsetminus U) } \log \frac{ \nu(A \smallsetminus U) }{ \nu(M \smallsetminus U) } \right| \\
&\qquad + \frac{\nu(M \smallsetminus U)}{n} \left| \int S_n g \intd \mu _{U^c} - \int S_n g \intd \nu_{U^c} \right|.
\end{align*}

Now, since
\begin{align*}\label{eq:bound diff cond measure}
S \coloneqq &\sum_{A \in \cP_0^n} \left| \frac{\mu(A \smallsetminus U)}{\mu(M \smallsetminus U)} - \frac{\nu(A \smallsetminus U)}{\nu(M \smallsetminus U)} \right| \\
&\qquad\qquad\quad \leqslant \frac{1}{\mu(M \smallsetminus U)} \sum_{A \in \cP_0^n} \left| \left( \frac{\mu(M \smallsetminus U)}{\nu(M \smallsetminus U)} -1 \right) \nu(A \smallsetminus U) \right| + \left| \mu(A \smallsetminus U) - \nu(A \smallsetminus U) \right| \\
&\qquad\qquad\quad \leqslant \frac{1}{\mu(M \smallsetminus U)} \sum_{A \in \cP_0^n} \left( \ve \nu(A \smallsetminus U) + \frac{\ve}{\# \cP_0^n} \right) \leqslant \frac{2 \ve}{\mu(M \smallsetminus U)} < \frac{1}{2},
\end{align*}
we can apply Lemma~\ref{lemma: diff of x log x} and use the Jensen inequality (for the last line) to get that 
\begin{align*}
& \frac{\mu(M \smallsetminus U)}{n} \sum_{A \in \cP_0^n} \left| - \frac{ \mu(A \smallsetminus U) }{ \mu(M \smallsetminus U) } \log \frac{ \mu(A \smallsetminus U) }{ \mu(M \smallsetminus U) } + \frac{ \nu(A \smallsetminus U) }{ \nu(M \smallsetminus U) } \log \frac{ \nu(A \smallsetminus U) }{ \nu(M \smallsetminus U) } \right|\\
& \quad \leqslant \frac{\mu(M \smallsetminus U)}{n} \sum_{A \in \cP_0^n} -\left| \frac{\mu(A \smallsetminus U)}{\mu(M \smallsetminus U)} - \frac{\nu(A \smallsetminus U)}{\nu(M \smallsetminus U)} \right| \log \left| \frac{\mu(A \smallsetminus U)}{\mu(M \smallsetminus U)} - \frac{\nu(A \smallsetminus U)}{\nu(M \smallsetminus U)} \right| \\
&\quad \leqslant \frac{\mu(M \smallsetminus U)}{n} S \left( \log \# \cP_0^n - \log S \right) \leqslant \frac{2\ve}{n} \log \# \cP_0^n + \frac{e^{-1}}{n}.
\end{align*}
We now turn to the difference of integrals term.
\begin{align*}
\mu(M \smallsetminus U) &\left| \int S_n g \intd \mu _{U^c} - \int S_n g \intd \nu_{U^c} \right| \\
& \leqslant \left| \int S_n g \, \mathbbm{1}_{U^c} \intd \mu - \int S_n g \, \varphi \intd \mu \right| 
+ \left| \int S_n g \, \varphi \intd \mu - \int S_n g \, \varphi \intd \nu \right| \\
&\quad + \left| \int S_n g \, \varphi \intd \nu - \int S_n g \, \mathbbm{1}_{U^c} \intd \nu \right|
+ \left| \frac{\mu(M \smallsetminus U)}{\nu(M \smallsetminus U} -1 \right| \int |S_n g| \, \mathbbm{1}_{U^c} \intd \nu \\
&\leqslant n \sup |g| ( \mu(U \smallsetminus U') + \nu(U \smallsetminus U')) + 2\ve \leqslant 5 \ve
\end{align*}

Thus, combining the above estimates inside \eqref{eq: contribution of M-U}, we get that the contribution of $M \smallsetminus U$ is at most
\begin{align*}
\nu(M \smallsetminus U)& P_\nu(g,\cP_0^n \mid M \smallsetminus U) - \mu(M \smallsetminus U) P_\mu(g,\cP_0^n \mid M \smallsetminus U) \\
&\leqslant \ve \left( \left| \frac{1}{n} \log \sum_{A \in \cP_0^n} \sup_{A} e^{S_n g} \right| + \sup |g| + 2 \frac{\log \# \cP_0^n}{n} + \frac{5}{n} \right) + \frac{e^{-1}}{n}.
\end{align*}
Combining the estimates of the contributions of $U$ and $M \smallsetminus U$ inside \eqref{eq:diff_decomp}, we deduce the claim from \eqref{eq:decomp_with_cond}.
\end{proof}

We are now able to prove the main bound of this section.

\begin{corollary}\label{prop:defect of USC}
For all finite horizon Sinai billiard, all $T$-invariant probability measure $\mu$, and all $\cM_0^1$-continuous potential $g$,
\begin{align*}
\limsup_{\nu \rightharpoonup \mu} P_\nu(T,g) \leqslant P_\mu(T,g) + \mu(\cS) \left( P_{\rm top}(T,g) - P_{\mu_\cS}(T,g) \right),
\end{align*}
where, in the $\limsup$, $\nu$ belongs to $\cM(M,T)$ and the convergence is in the weak-$^*$ topology.
\end{corollary}

\begin{proof}
Let $\mu \in \cM(M,T)$ and $\ve > 0$ small enough as in Proposition~\ref{prop: diff of static entropies}. Since $\cP$ is a generating partition, we can choose $n=n(\ve)$ large enough so that 
\begin{align*}
h_\mu(T) \geqslant \frac{1}{n}H_{\mu}(\cP_0^n) - \ve.
\end{align*}
Let $V_{\ve,n}$ be the neighbourhood of $\mu$ given by Proposition~\ref{prop: diff of static entropies} (up to increasing the value of $n$). Therefore
\begin{align*}
P_{\mu}(T,g) &= h_{\mu}(T) + \mu(g) \geqslant \frac{1}{n} H_{\mu}(\cP_0^n) - \ve + \mu(g) = P_{\mu}(g,\cP_0^n) - \ve \\
 &\geqslant \sup_{\nu \in V_{\ve,n}} P_{\nu}(g,\cP_0^n) - \ve - R_{\ve,n}(g) \\
 &\geqslant \sup_{\nu \in V_{\ve,n}} P_{\nu}(T,g) - \ve - R_{\ve,n}(g),
\end{align*}
where we used that $P_{\nu}(T,g) = \inf_n P_{\nu}(g,\cP_0^n)$. We can choose $n=n(\ve)$ such that, taking the limit $\ve \to 0$, we get $n(\ve) \to +\infty$. Therefore, taking the limit as $\ve$ goes to zero of the previous expression yields
\begin{align*}
P_\mu(T,g) \geqslant \limsup_{\nu \rightharpoonup \mu} P_\nu(T,g) - \mu(\cS) P_{\rm top}(T,g) + \int_\cS g \intd \mu \, .
\end{align*}
Finally, since $\mu_\cS$ is atomic, it follows that $h_{\mu_\cS}(T)=0$. Hence 
\begin{align*}
P_{\mu_\cS}(T,g) = \int g \intd \mu_\cS = \frac{1}{\mu(\cS)} \int_\cS g \intd \mu,
\end{align*}
which finishes the proof.
\end{proof}

\subsection{Maximizing sequences and Equilibrium States}

In this part we prove that is if there exists a maximizing sequence for the pressure for a potential with small enough amplitude, then there exist associated equilibrium state. In order to give a bound on the amplitude of the potential, we need to following lemma, controlling the metric entropy on a neighbourhood of $\cM_{\cS}(M,T)$.

\begin{lemma}\label{lemma: ngbh of singular measures with small entropy}
For all $\mu \in \cM_{\cS}(M,T)$ and all $\ve >0$, there exists an open neighbourhood $U_\mu \subset \cM(M,T)$ of $\mu$ such that $\sup\{ h_{\nu}(T) \mid \nu \in U_\mu \} \leqslant \ve$.
\end{lemma}

Note that measures arbitrarily close to $\cM(M,T)$ of positive entropy have been constructed in \cite{CDLZ24}.

\begin{proof}
Let $\mu \in \cM_{\cS}(M,T)$. Then, by Lemma~\ref{lemma: structure of measures on singularities}, $\mu = \sum_{i \in I} \omega_i \delta_{\cO(x_i)}$, where $I$ is at most countable. We first consider the case where $I$ is infinite. We identify $I$ with $\bN$.

Let $\ve > 0$ and $i_0$ large enough so that $\sum_{i > i_0} \omega_i < \ve$. Since $\cO_\mu \coloneqq \bigcup_{i=1}^{i_0} \cO(x_i) \subset M$ is a finite set and $\lim_{n \to \infty} \diam( \cP_{-n}^n) = 0$, there exists $n_0$ such that for all $n \geqslant n_0$ and all $A \in \cP_{-n}^n$, $\# \bar A \cap \cO_\mu \leqslant 1$. Notice that there are at most $Kn C_0$ elements $A$ of $\cP_{-n}^n$ where the equality holds, where $C_0 = \# \cO_\mu$. Define $U_n$ the interior of the union of those elements of $\cP_{-n}^n$. For any $x \in \cO_\mu$, we write $U_n(x)$ the connected component of $U_n$ containing $x$. Up to increase the value of $n$, we can assume that the $U_n(x)$ are pairwise disjoint.

Since each $U_n(x),x \in \cO_\mu$ is open, there is an open neighbourhood $V_{\ve,n}$ of $\mu$ such that for all $\nu \in V_{\ve,n}$, $\nu(U(x)) > \mu(U(x)) - \ve / C_0$. In particular, by choice of $i_0$, $1 - \nu(U_n) \leqslant 2 \ve$. Therefore, for any $\nu \in V_{\ve,n}$,
\begin{align*}
h_\nu(T) &\leqslant \frac{1}{n}H_\nu(\cP_0^n) = \frac{1}{n}\nu(U_n)H_\nu(\cP_0^n | U_n) + \frac{1}{n}(1-\nu(U_n))H_\nu(\cP_0^n | U_n^c) \\
&\leqslant \frac{\log Kn C_0}{n} + 2\ve \frac{\log \cP_0^n}{n} \leqslant 3 \ve h_*,
\end{align*}
where the last inequality holds as long as $n$ is large enough. The lemma follows by setting $U_\mu = V_{\ve/3h_*,n}$ for some large enough $n$. The case where $I$ is finite is analogous.
\end{proof}

\begin{corollary}\label{corol: max sequence}
Let $g$ be a $\cM_0^n$-continuous potential such that $h_* > \sup g - \inf g$. If there exists a sequence of measure $\mu_k \in \cM(M,T)$ such that 
\[ P_{\rm top}(T,g) = \lim_{k \to \infty} P_{\mu_k}(T,g), \]
then there exist equilibrium states associated to $g$ for the collision map. Furthermore, the variational principle holds.
\end{corollary}

\begin{proof}
Let $g$ and $\mu_k$ as in the statement of the corollary. By compactness, we can assume that $\mu_k$ converges to some invariant measure $\mu$. Therefore, by Corollary~\ref{prop:defect of USC},
\begin{align*}
P_{\rm top}(T,g) = \lim_{k \to \infty} P_{\mu_k}(T,g) \leqslant P_{\mu}(T,g) + \mu(\cS) ( P_{\rm top}(T,g) - P_{\mu_\cS}(T,g)).
\end{align*}
Now, using that the Kolmogorov--Sinai entropy is harmonic, it follows that 
\begin{align*}
P_{\mu}(T,g) = (1 - \mu(\cS)) P_{\mu _{\cS^c}}(T,g) + \mu(\cS) P_{\mu_\cS}(T,g).
\end{align*}
Thus,
\begin{align*}
(1 - \mu(\cS)) P_{\rm top}(T,g) \leqslant (1 - \mu(\cS)) P_{\mu_{\cS^c}}(T,g).
\end{align*}
In particular, either $\mu(\cS)=1$ (and $\mu _{\cS^c}$ is not defined) or $P_{\mu _{\cS^c}}=P_{\rm top}(T,g)$.

By contradiction, assume that $\mu(\cS)=1$. Let $\ve > 0$ be small enough so that $h_* > \sup g - \inf g + 2\ve$. By Lemma~\ref{lemma: ngbh of singular measures with small entropy}, for any $k$ large enough $h_{\mu_k}(T) < \ve$. Hence
\begin{align*}
P_{\mu_k}(T,g) \leqslant \ve + \sup g < h_* + \inf g - \ve \leqslant P_{\rm top}(T,g) - \ve,
\end{align*}
which contradicts the assumption on the sequence $\mu_k$.
\end{proof}

\begin{corollary}\label{corol:USC_at_ergodic_measures}
The Kolmogorov--Sinai entropy is upper semi-continuous at every ergodic measure.
\end{corollary}

\begin{proof}
Let $\mu$ be an ergodic invariant measure. Since $\cS$ is invariant, either $\mu(\cS)=0$ or $\mu(\cS^c)=0$. In the first case, it follows from Corollary~\ref{prop:defect of USC} that the Kolmogorov--Sinai entropy is continuous at $\mu$. In the second case, it must be that $\mu \in \cM_{\cS}(M,T)$. By Lemma~\ref{lemma: structure of measures on singularities}, since $\mu$ is ergodic, it is supported by a single periodic orbit. Hence $h_{\mu}(T)=0$. The upper-semi continuity of the Kolomogorov--Sinai entropy at $\mu$ follows from Lemma~\ref{lemma: ngbh of singular measures with small entropy}.
\end{proof}

\subsection{Distribution of periodic orbits}

Before focusing on the distribution of periodic orbits, we need an upper-bound on $\# \Fix T^n$. This is the content of the following lemma\footnote{The estimate \eqref{eq: counting per orbits} is due to Mark Demers. The author is grateful to him for allowing to use his proof.}\footnote{Equation \eqref{eq: counting per orbits sharper} was initially state for $g \equiv 0$. The author is grateful to Yuri Lima for his explanations on how to generalise it.}

\begin{lemma}\label{lemma:number_of_periodic_points}
For all $n \geqslant 2$ and all $A \in \cP_0^n$, $A$ contains at most one periodic orbit of period $n$. In particular, for any $\cM_0^1$-piecewise continuous potentials
\begin{align}\label{eq: counting per orbits}
\sum_{x \in \Fix T^n} e^{S_n g(x)} \leqslant \sum_{A \in \cP_0^n} \sup_{A} e^{S_n g}.
\end{align} 
Assuming that $g$ is piecewise H\"older and has sparse recurrence, that is, if $P_{\rm top}(T,g) - \sup g > s_0 \log 2$, it exists a constant $C > 0$ such that for all $n \geqslant 2$,
\begin{align}\label{eq: counting per orbits sharper}
C^{-1} e^{n P_{\rm top}(T,g)} \leqslant \sum_{x \in \Fix T^n} e^{S_n g(x)} \leqslant C e^{n P_{\rm top}(T,g)}
\end{align}
\end{lemma}

This lemma gives a positive answer to \cite[Problem~5]{gutkin2012}.

\begin{proof}
Let $n \geqslant 2$, $A \in \cP_0^n$ and $x,y \in \Fix T^n \cap A$. Since $T^n$ is continuous on $A$, we get that $T^n x= x$ and $T^n y = y$ both belong to $T^n A \in \cP_{-n}^0$. Since $A \cap T^n A \in \cP_{-n}^n$, applying $T^{-n}$ we get that $x$ and $y$ belong to the same element of $\cP_0^{2n}$. Repeating this argument, we obtain that $x$ and $y$ belong to the same element of $\cP_{-2^k n}^{2^k n}$ for all $k \geqslant 0$. Since $\diam \cP_{-k}^k$ goes to zero as $k$ goes to infinity, it must be that $x=y$. From this, \eqref{eq: counting per orbits} follows.

Now, in the case $g \equiv 0$, from sparse recurrence it follows from \cite[Proposition~4.6]{BD2020MME} that there is a constant $C >0$ such that $\# \cP_0^n \leqslant C e^{n h_*}$. The lower estimate on $\# \Fix T^n$ is the content of \cite[Theorem~1.5]{Bu} (itself bootstrapping from \cite[Corollary~2.7]{BD2020MME} and \cite[Theorem~1.3]{LM}). 

For more a general potential, as in the statement of the lemma, a similar strategy can be applied. Indeed, the upper estimate is the content of \cite[Proposition~3.10]{Ca1} (note that sparse recurrence implies the small singular pressure assumptions, as explained in \cite{BCD}). For the lower estimate, we also rely on the coding $\pi : (\Sigma, \sigma) \to (M,T)$ constructed in \cite[Theorem~1.3]{LM} with hyperbolicity parameter related to the unique equilibrium state $\mu_g$ associated to $g$.  

According to \cite[Corollary~6.3]{Ca1}, $\mu_g$ is $T$-adapted in the sense of \cite{LM}. Since it is $\chi$-hyperbolic from the choice of $\chi$, \cite[Theorem~1.3]{LM} implies the existence of $\hat \mu_g \in \cM_{\rm erg}(\Sigma^\#,\sigma)$ such that $\pi_* (\hat \mu_g) = \mu_g$. Since $\pi_{| \Sigma^\#}$ is finite-to-one, $\pi_*$ preserves the entropy, and thus the metric pressure. Therefore, $\hat \mu_g$ is an equilibrium state for $\Sigma$. 

From \cite[Theorem~4.6]{sarig2009}, since the potential $\hat g \coloneqq g \circ \pi$ admits $\hat \mu_g$ as an equilibrium state, it is positively recurrent. Thus, by the generalised Ruelle's Perron--Frobenius theorem (see \cite[Theorem~3.4]{sarig2009}), we get that there is a cylinder $[a]$ such that the following convergence holds,
\begin{align*}
\lambda^{-n} L_{\hat g}^n \mathbbm{1}_{[a]} \to c > 0,
\end{align*}
where $L_{\hat g}$ is the weighted transfer operator associated to $\sigma$ and $\hat g$, and $\lambda = \exp ( P_G(\hat g) )$ where $P_G(\hat g)$ is the Gurevich pressure of $\hat g$. Note that by the existence of $\hat \mu_g$, $P_G(\hat g) \geqslant P_{\rm top}(T,g)$.
Now, since 
\begin{align*}
(\mathbbm{1}_{[a]}L_{\hat g}^n \mathbbm{1}_{[a]}) (X) = \sum_{Y \in \sigma^{-n}(X)} e^{S_n \hat g (Y)},
\end{align*}
where in the sum $Y$ ranges over all the sequences of the form $(y_0, \ldots, y_{n-1},X)$, with $y_0=y_n=a$. Since $g$ and $\pi$ are both H\"older, $\hat g$ is also H\"older. Thus, there is a constant $K > 0$ such that $|S_n \hat g (Y) - S_n \hat g (Z_Y) | \leqslant K$, where $Z_Y$ is the periodic orbit of period $n$ whose $n$-th first coordinates coincide with the ones of $Y$. Thus,
\begin{align}\label{eq: sum Z}
\sum_{Z \in [a] \cap \Fix \sigma^n} e^{S_n \hat g (Z)} \geqslant \frac{c}{2} e^{-K} e^{n P_G(\hat g)}
\end{align}
for all $n$ large enough. Up to decreasing the value of $c$, the claimed result holds for all $n \geqslant 2$.
\end{proof}

In this subsection, given a potential $g$, we consider the measures supported on periodic orbits, denoted
\begin{align*}
\mu_n = \frac{1}{\sum_{x \in \Fix T^n} e^{S_n g(x)} } \sum_{x \in \Fix T^n} e^{S_n g(x)} \delta_x.
\end{align*}

\begin{corollary}\label{corol: equidist of per orbits}
Assume that sparse recurrence holds for a $\cM_0^1$-piecewise H\"older potential $g$ and let $\mu$ be an accumulation point of the sequence $\mu_n$. If $\mu(\cS) < 1$, then $\mu_{\cS^c} = \mu_g$, where $\mu_{\cS^c} \in \cM(M,T)$ is the conditional measure of $\mu$ on $\cS^c$

Further assuming that the singularities are negligible, then $\mu_n \rightharpoonup \mu_g$ as $n$ goes to infinity. In particular, when $g \equiv 0$, the periodic orbits equidistribute with respect to the unique measure of maximal entropy.
\end{corollary}

\begin{proof}
From Lemma~\ref{lemma:number_of_periodic_points}, it follows that for all $A \in \cP_0^n$, $\mu_n(A)$ is equal to either to $0$ or to $1/ \sum_{x \in \Fix T^n} e^{S_n g(x)}$. Therefore, assuming sparse recurrence,
\begin{align*}
\frac{1}{n} H_{\mu_n}(\cP_0^n) + \int g \intd \mu_n &= - \frac{1}{n}\sum_{x \in \Fix T^n} \mu_n(\{x \}) \left( \log \mu_n(\{ x \}) - S_n g(x) \right) \\
&= \frac{1}{n}\log \sum_{x \in \Fix T^n} e^{S_n g(x)} \geqslant P_{\rm top}(T,g) - \frac{\log C}{n}.
\end{align*}
For any $q \leqslant n$, write the Euclidean division of $n$ by $q$, $n=j_0 q + r_0$. Now, since $\cP_0^n = \bigvee_{j=0}^{j_0-1} T^{-jq} \cP_0^{q} \vee \bigvee_{i=j_0 q +1}^n T^{-i} \cP$, we get by sub-additivity and $T$-invariance of $\mu_n$,
\begin{align*}
\frac{1}{n} H_{\mu_n}(\cP_0^n) &\leqslant \frac{j_0}{n} H_{\mu_n}(\cP_0^q) +  \frac{r_0}{n} H_{\mu_n}(\cP) \leqslant \frac{1}{q} H_{\mu_n}(\cP_0^q) + \frac{q}{n} \log \# \cP.
\end{align*}

Consider $n_k$ and $\mu$ such that $\mu_{n_k} \weakto \mu$ as $n_k$ goes to infinity. Let $\ve > 0$. Up to extracting again, we can assume that each $\mu _{n_k}$ belong to the neighbourhood $V_{\ve,n_k}$ of $\mu$ from Proposition~\ref{prop: diff of static entropies}. Therefore, combining the two above estimates, taking the limit $n_k \to \infty$ and applying Proposition~\ref{prop: diff of static entropies}, we get that
\begin{align*}
P_{\rm top}(T,g) \leqslant P_\mu(g,\cP_0^q) + \frac{\mu(\cS)}{q} \log \sum_{A \in \cP_0^n} \sup_{A} e^{S_n g} - \int_{\cS} g \intd \mu + R_{\ve,q}(g).
\end{align*}
Now, taking successively the limits $q \to \infty$ and $\ve \to 0$, and writing $\mu = (1-\mu(\cS))\mu _{\cS^c} + \mu(\cS)\mu _{S}$ yield
\begin{align*}
P_{\rm top}(T,g) \leqslant (1-\mu(\cS)) P_{\mu _{ \cS^c}}(T,g) + \mu(\cS) P_{\rm top}(T,g),
\end{align*}
where we used that, since $\mu _{ \cS} \in \cM_{\cS}(M,T)$, $h_{\mu _{ \cS}} = 0$. Therefore
\begin{align*}
(1-\mu(\cS)) P_{\rm top}(T,g) = (1-\mu(\cS)) P_{\mu _{ \cS^c}}(T,g).
\end{align*}
Finally, if $\mu(\cS) = 1$, then $\mu = \mu _{ \cS}$. Otherwise, $\mu _{ \cS^c}$ has maximal pressure. By \cite[Theorem~2.4]{BD2020MME}, the equilibrium state is unique. Therefore $\mu _{ \cS^c} = \mu_g$, where $\mu_g$ is the unique equilibrium state of $T$ and $g$.
\end{proof}

\section{Topological tail entropy}\label{sect:tail_entropy}

Even though the relation between the upper semi-continuity of the entropy map and the topological tail entropy is not clear in the case of transformations with discontinuities, we give in this section an estimate on the topological tail entropy, as defined in \eqref{eq:topo_tail_entropy}, in Corollary~\ref{prop: top tail entropy bound}. To do so, we split the contributions of points in $M \smallsetminus \cS_n$ and $\cS_n$ in Propositions~\ref{lemma: regular points} and \ref{lemma: singular points} respectively. 

First, recall that in the proof of \cite[Lemma~3.4]{BD2020MME}, Baladi and Demers proved that\footnote{The definition of the Bowen metric $d_n$ in \cite{BD2020MME} corresponds here to $d_{n+1}$.} there exists $\ve_0 > 0 $ with the property that for any $n \geqslant 0$, if $x,y$ lie in different elements of $\cM_0^n$, then $d_{n+1}(x,y) \geqslant \ve_0$.

\begin{proposition}\label{lemma: regular points}
There exists a constants $C >0$ such that for all $0< \ve < \ve_0$, all $\delta >0 $, all $n \geqslant 2$,
\begin{align*}
\log r(n,\delta, B(x,\ve,n)) \leqslant 2 \frac{\log \# \cP}{\log \Lambda} \log \delta^{-1} + C, \quad \forall x \in M \smallsetminus \cS_{n-1}.
\end{align*} 
\end{proposition}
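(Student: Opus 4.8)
The plan is to bound the number of $(n,\delta)$-spanning points inside a single dynamical ball $B(x,\ve,n)$ when $x$ is a regular point, i.e.\ $x \notin \cS_{n-1}$, so that $x$ lies in a single element $A$ of $\cM_0^{n-1}$. The key geometric input is the uniform separation recalled just before the statement: points in distinct elements of $\cM_0^{n-1}$ are $d_n$-separated by at least $\ve_0$. Since $\ve < \ve_0$, the ball $B(x,\ve,n)$ is therefore contained (up to the zero-measure singular curves, which do not affect spanning counts) in the single component $A \in \cM_0^{n-1}$ containing $x$. So it suffices to count how finely one must partition $A$ to get sets of $d_n$-diameter at most $\delta$.

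First I would use the hyperbolicity of the billiard map: on each component of $\cM_0^{n-1}$ the map $T^i$ expands unstable curves by a factor bounded below by $\Lambda^i$ (here $\Lambda > 1$ is the minimal hyperbolicity constant, which the author evidently denotes $\Lambda$), and correspondingly $T^{-i}$ on $T^i A$ contracts. A standard estimate then shows that the $d_n$-diameter of a piece of $A$ is comparable, up to uniform distortion constants, to its diameter measured after flowing: a set $Z \subset A$ on which $\diam(T^i Z)$ in the unstable direction is controlled for all $0 \le i < n$ has small $d_n$-diameter. To make $Z$ have $d_n$-diameter $\le \delta$ it is enough to cut $A$ along stable-like and unstable-like directions into roughly $(\diam A / \delta)$-many pieces in each of the two directions, because once the unstable extent of $T^{n-1}Z$ is $\le \delta$ the unstable extents of all earlier iterates are automatically $\le \delta \Lambda^{-(n-1-i)} \le \delta$, and symmetrically for the stable direction under backward iteration within $A$. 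Thus $r(n,\delta,B(x,\ve,n)) \le C' (\delta^{-1})^2$ for a uniform constant $C'$, which gives $\log r(n,\delta,B(x,\ve,n)) \le 2\log\delta^{-1} + \log C'$.

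To sharpen the constant to the claimed $2 \frac{\log \#\cP}{\log \Lambda}\log\delta^{-1} + C$, rather than a crude $\diam A / \delta$ count I would instead count using the refined partitions $\cP_0^m$: within the fixed component $A \in \cM_0^{n-1}$, the elements of $\cP_0^{n-1+m}$ intersected with $A$ already have unstable extent $\lesssim \Lambda^{-m}$ (each application of $\cP$ cuts by the singular curves and hyperbolicity shrinks the pieces geometrically), and there are at most $(\#\cP)^{m}$ of them meeting $A$; choosing $m$ with $\Lambda^{-m} \approx \delta$, i.e.\ $m \approx \frac{\log\delta^{-1}}{\log\Lambda}$, yields at most $(\#\cP)^{m} \approx (\delta^{-1})^{\log\#\cP/\log\Lambda}$ pieces controlling the unstable direction, and the same bound again for the stable direction by running the argument for $T^{-1}$ (this is where the factor $2$ comes from), and the two together give a $d_n$-cover of $A$ by $(\delta^{-1})^{2\log\#\cP/\log\Lambda}$ sets of $d_n$-diameter $\le \delta$ up to the uniform constant absorbed into $C$. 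Taking logs gives the stated inequality, uniformly in $x \in M\smallsetminus\cS_{n-1}$, in $\delta > 0$, in $\ve < \ve_0$, and in $n \ge 2$.

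The main obstacle I expect is the distortion bookkeeping: relating the $d_n$-diameter of a curved piece of phase space to the unstable/stable extents of its iterates requires the standard but delicate billiard estimates (bounded distortion of the unstable Jacobian along homogeneous unstable curves, transversality of stable and unstable cones, and the fact that $d$ on a connected component is the ambient Euclidean metric), and one must check that the constant $C$ can be taken independent of $n$ — this is exactly the point where uniform hyperbolicity with rate $\Lambda$ and the bounded-distortion property are essential, and where one invokes that $x \notin \cS_{n-1}$ so that no singularity is crossed along the orbit segment $x, Tx, \dots, T^{n-1}x$.
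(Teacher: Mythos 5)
Your proposal follows essentially the same route as the paper: confine $B(x,\ve,n)$ to the single element of $\cM_0^{n-1}$ containing $x$ using the $\ve_0$-separation, then refine by $k_\delta\approx \log\delta^{-1}/\log\Lambda$ extra steps of $\cP$ both forward and backward so that each refined element has $d_n$-diameter below $\delta$ (the paper phrases this via maximal $(\delta/2,n)$-separated sets and the estimates of \cite[Lemma~3.4]{BD2020MME}), yielding the count $(\#\cP)^{2k_\delta+O(1)}$. The hyperbolicity/distortion input you defer to standard billiard estimates is exactly what the paper imports from \cite{BD2020MME}, so the argument is correct and matches.
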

In particular, taking the appropriate limits in $n$, $\delta$ and $\ve$, notice that points of $M \smallsetminus \bigcup_{n \geqslant 0} \cS_n$ don't contribute to $h^*(T)$.

\begin{proof}
Let $x \in M \smallsetminus \cS_{n-1}$, $\ve < \ve_0$ and $\delta > 0$. Let $\cM_0^{n-1}(x)$ denote the unique element of $\cM_0^{n-1}$ containing $x$. Then, $B(x,\ve,n) \subset \cM_0^{n-1}(x)$. 

Note that if $F' \subset B(x,\ve,n)$ is $(\delta/2, n)$-separated of maximal cardinality, then $F'$ is $(\delta,n)$-spanning $B(x,\ve,n)$. In particular, $r(\delta,n,B(x,\ve,n)) \leqslant \# F'$. Furthermore, if $F \subset \cM_0^{n-1}(x)$ is $(\delta/2,n)$-separated with maximal cardinality, then $F$ is $(\delta,n)$-spanning and $\# F' \leqslant \# F$. Let $F$ be such a set.

Since $F$ is $(\delta/2,n)$-separated, according to the proof of \cite[Lemma~3.4]{BD2020MME}, each element of $\hP_{-k_\delta}^{n-1+k_\delta}$ contains at most one point of $F$, where $k_\delta$ is the smallest integer such that $C_1 \Lambda^{-k_\delta} < c_1 \delta/2$, where $c_1$ and $C_1$ are constants coming respectively from \cite[Lemma~3.4]{BD2020MME} and \cite[eq. (3.1)]{BD2020MME}. Since $\cM_0^{n-1}(x)$ contains at most $\# \cM_{-1}^0$ elements of $\cM_{-1}^{n-1}$ and, from \cite[Lemma~3.3]{BD2020MME}, $\cM_{-1}^{n-1} = \hP_0^{n-2}$, we get that $\cM_0^{n-1}(x)$ contains at most $\# \cM_{-1}^0 \# \cP_{-k_\delta}^0 \# \cP_0^{k_\delta +1}$ elements of $\hP_{-k_\delta}^{n-1+k_\delta}$. Therefore
\begin{align*}
r(n,\delta, B(x,\ve,n)) \leqslant \# F \leqslant \# \cM_{-1}^0 (\# \cP)^{2k_\delta +1} \leqslant \# \cM_{-1}^0 \# \cP  \exp \left( - 2\log \# \cP \left\lceil \frac{\log c_1 C_1^{-1} \delta}{\log \Lambda} \right\rceil  \right).
\end{align*}
Taking the $\log$ finishes the proof.
\end{proof}

\begin{proposition}\label{lemma: singular points}
There exists a constant $C >0$ such that for all $0< \ve < \ve_0$ small enough, all $\delta > 0$ and all $n\geqslant n_0$,
\begin{align*}
\log r(n,\delta, B(x,\ve,n)) \leqslant 2 \frac{\log \# \cP}{\log \Lambda} \log \delta^{-1} + C + \left( 3+ 2\left\lfloor \frac{\max \tau}{\min \tau} \right\rfloor \right)(n+n_0)s_0 \log 2K, \quad \forall x \in \cS_{n-1},
\end{align*} 
where $K$ is such that $K_n \leqslant Kn$ for all $n$.
\end{proposition}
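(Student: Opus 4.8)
The plan is to follow the scheme of the proof of Proposition~\ref{lemma: regular points}, the only new feature being that a singular point $x\in\cS_{n-1}$ lies on the boundary of several elements of $\cM_0^{n-1}$, so the Bowen ball $B(x,\ve,n)$ is no longer contained in a single one of them. First I would split the count multiplicatively. Let $\cA_x$ be the finite family of elements of $\cM_0^{n-1}$ meeting $B(x,\ve,n)$. For each $A\in\cA_x$ the argument of Proposition~\ref{lemma: regular points} applies to the piece $B(x,\ve,n)\cap A\subset A$: a $(\delta/2,n)$-separated subset of it meets each element of $\hP_{-k_\delta}^{n-1+k_\delta}$ at most once, and $A$ contains at most $\#\cM_{-1}^0\,(\#\cP)^{2k_\delta+1}$ such elements. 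Since $r(n,\delta,B(x,\ve,n))\leqslant s(n,\delta/2,B(x,\ve,n))$, summing over $A\in\cA_x$ yields
\[
r(n,\delta,B(x,\ve,n))\leqslant \#\cA_x\cdot\#\cM_{-1}^0\,(\#\cP)^{2k_\delta+1},
\]
and taking logarithms (using $k_\delta\leqslant\tfrac{\log(C_1 c_1^{-1})+\log\delta^{-1}}{\log\Lambda}+1$ as in the regular case) reproduces the terms $2\tfrac{\log\#\cP}{\log\Lambda}\log\delta^{-1}+C$. Everything then reduces to the bound $\log\#\cA_x\leqslant\bigl(3+2\lfloor\tfrac{\max\tau}{\min\tau}\rfloor\bigr)(n+n_0)\,s_0\log 2K$.

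To estimate $\#\cA_x$, the point is that the elements of $\cM_0^{n-1}$ clustering around $x$ are separated by the singularity curves of $\cS_{n-1}=\bigcup_{i=0}^{n-1}T^{-i}\cS_0$ passing near $x$, so I would track, step by step along the first $n$ iterates, when the small images $T^iB(x,\ve,n)$ can actually be cut by $\cS_0$. Fixing $\varphi_0<\pi/2$ first (this is the $\varphi_0$ in $s_0=s_0(n_0,\varphi_0)$) and taking $\ve$ small enough that the diameter of each image $T^iB(x,\ve,n)$ is below $\tfrac12(\tfrac\pi2-\varphi_0)$, an image can meet more than one component of $M\smallsetminus\cS_0$ only at a \emph{near-grazing} index, $|\varphi|\circ T^ix\geqslant\varphi_0$; at such an index the local structure of $\cS_{n-1}$ near $T^ix$ together with Chernov's linear bound $K_m\leqslant Km$ limits the number of components reached to at most $2K$ (the curves that genuinely cut the small image are those produced at one near-grazing return, so the factor is $2K$, not $2Kn$). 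This gives $\#\cA_x\leqslant(2K)^{N}$ where $N$ is the number of near-grazing indices in the relevant time window; that window is $\{0,\dots,n-1\}$ enlarged by the two-sided $k_\delta$-refinement in $\hP_{-k_\delta}^{n-1+k_\delta}$, and, when the near-grazing count is transferred through the suspension as in \cite[Proposition~5.6]{DK}, by the collision-to-flow-time ratio, giving a window of length at most $(3+2\lfloor\tfrac{\max\tau}{\min\tau}\rfloor)(n+2k_\delta)$. Cutting this window into blocks of length $n_0$ and applying the definition of $s_0(n_0,\varphi_0)$ blockwise bounds $N$ by $(3+2\lfloor\tfrac{\max\tau}{\min\tau}\rfloor)(n+n_0)\,s_0$, up to a term $O(k_\delta\log 2K)=O(\log\delta^{-1})$ which is absorbed into the constant $C$. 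Combining the two steps gives the claimed inequality.

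The main obstacle is the uniform per-step complexity estimate at near-grazing returns: showing that, when $T^iB(x,\ve,n)$ is $\varphi_0$-close to grazing, it can be cut by $\cS_{n-1}$ into at most $2K$ pieces, independently of $n$. This is exactly where Chernov's bound $K_n\leqslant Kn$ and the distortion/homogeneity estimates underlying \cite[Lemma~3.4]{BD2020MME} enter, and it is what produces the factor $\log 2K$ in the exponent; obtaining the precise constant $3+2\lfloor\max\tau/\min\tau\rfloor$ will require carefully bookkeeping the two-sided refinement and the conversions between collision count and flow time used in \cite[Proposition~5.6]{DK}.
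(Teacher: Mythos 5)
Your two-factor decomposition is exactly the paper's: the number of elements of $\cM_0^{n-1}$ (equivalently of $\cP_0^{n-1}$) meeting $B(x,\ve,n)$, multiplied by the per-element spanning count $\#\cM_{-1}^0(\#\cP)^{2k_\delta+1}$ inherited from Proposition~\ref{lemma: regular points}; and the step-by-step itinerary count, with at most $2K$ choices at ``bad'' steps and with $s_0$ controlling the frequency of bad steps over blocks of length $n_0$, is also the paper's scheme. The gap is in your characterisation of the bad steps. You assert that a cut can occur only at a near-grazing index of the orbit of $x$ itself, i.e.\ when $|\varphi|(T^ix)\geqslant\varphi_0$ (or, allowing for $T^{\pm1}\cS_0$ in $\partial\cP$, when one of $T^{i-1}x$, $T^ix$, $T^{i+1}x$ is near-grazing). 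This is false: a branch of $T^{-1}\cS_0$ passes through $B(T^ix,\ve)$ whenever the flow segment from $T^ix$ to $T^{i+1}x$ passes within distance $O(\ve)$ of a scatterer \emph{without colliding} --- nearby billiard orbits then hit that scatterer tangentially --- and at such an index none of the collisions of $x$ need be near-grazing, so these events are invisible to $\sum_i\mathbbm{1}_{\{|\varphi|>\varphi_0\}}\circ T^i(x)$ and your blockwise application of the definition of $s_0(n_0,\varphi_0)$ does not bound their number. This ``near miss'' case is exactly what the paper's Lemma~\ref{sublemma:near_miss} is for: a compactness/shadowing argument associates to each near miss of $x$ a genuine grazing collision of a shadowing orbit $y$, and $s_0$ is then applied to $y$; the factor $\lfloor\max\tau/\min\tau\rfloor$ arises because $y$ may make up to that many more collisions than $x$ in the same flow time, and the factor $2$ because the miss can occur on the forward or the backward segment. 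Without this lemma (or a substitute) your bound on $N$ does not go through. Relatedly, you misidentify the main difficulty as the per-step $2K$ bound, which is comparatively soft (uniform continuity of the flow plus Chernov's bound for one step gives it once $\ve$ is small).

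A secondary point: the constant $C$ in the statement is independent of $\delta$, so the extra $O(k_\delta\log 2K)=O(\log\delta^{-1})$ term you propose to ``absorb into $C$'' cannot go there. It is also unnecessary: the $k_\delta$-refinement is needed only for the per-element $(\delta/2,n)$-separated count, not for deciding which elements of $\cM_0^{n-1}$ the Bowen ball meets, so the itinerary window is $\{0,\dots,\lceil n/n_0\rceil n_0-1\}$, of length at most $n+n_0$, with no $k_\delta$ enlargement.
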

Actually, the estimate in Proposition~\ref{lemma: singular points} holds for all $x \in M$, but in lights of Proposition~\ref{lemma: regular points} we only state it for points of $\cS_{n-1}$.

To prove the above, we rely on a following result which allows to count the number of times a flow orbit passes near a scatterer without making a collision.

\begin{lemma}\label{sublemma:near_miss}
For all $n_0$ fixed, there exists $\tilde \ve = \tilde \ve(n_0)$ such that for all $\ve \in (0, \tilde \ve)$ and all $x \in M$, the curve $\{ \Phi_t (x) \mid t \in [0 , \sum_{i=0}^{n_0-1} \tau(T^ix)] \} \subset \Omega $ passes at distance less than $\ve$ to the scatterers without making a collision at most $\lfloor \max \tau / \min \tau \rfloor s_0 n_0$ times.
\end{lemma}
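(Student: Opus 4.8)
The plan is to relate the near-misses of the flow orbit to the $\varphi_0$-grazing collisions counted by $s_0$. First I would fix $n_0$ and consider a point $x \in M$ together with its flow orbit segment $\gamma = \{\Phi_t(x) : t \in [0, \sum_{i=0}^{n_0-1}\tau(T^ix)]\}$. This segment contains exactly $n_0$ collisions, namely at the points $T^0x, T^1x, \dots, T^{n_0-1}x$, and between consecutive collisions the orbit travels along a straight line in $Q$ of length $\tau(T^ix) \leqslant \max\tau$. A ``near-miss'' is a moment where $\gamma$ comes within distance $\ve$ of some scatterer $\partial B_j$ without actually colliding. The key geometric observation is that because the scatterers are strictly convex with curvature bounded below, a straight segment of bounded length can pass within distance $\ve$ of a given scatterer (tangentially, i.e.\ without hitting it) only in a controlled way: for $\ve$ small enough, such a near-miss on a linear segment of length at most $\max\tau$ forces the two genuine collisions bracketing that segment (or at least one of them) to be nearly grazing, i.e.\ to have $|\varphi| > \varphi_0$ for an appropriate $\varphi_0 = \varphi_0(\ve)$ with $\varphi_0 \to \pi/2$ as $\ve \to 0$. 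More precisely, if the chord from $T^ix$ to $T^{i+1}x$ passes at distance $<\ve$ from a scatterer without meeting it, then by convexity and the curvature bound the incoming or outgoing direction at one of these two collisions must make an angle close to tangential with the corresponding boundary, so that $\mathbbm{1}_{(|\varphi|>\varphi_0)}$ is $1$ at $T^ix$ or $T^{i+1}x$.

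Next I would convert this into a counting statement. Choose $\tilde\ve = \tilde\ve(n_0)$ small enough that the above implication holds with the particular $\varphi_0$ entering the definition of $s_0 = s_0(n_0,\varphi_0)$; here one uses that the relation $s_0(n_0,\varphi_0)$ appearing in the statement is really $s_0$ for the fixed parameters, and shrinking $\ve$ only makes $\varphi_0$ larger, which does not increase $s_0$. Then each near-miss along $\gamma$ is charged to a $\varphi_0$-grazing collision among $T^0x, \dots, T^{n_0-1}x$. The number of such grazing collisions is, by the very definition of $s_0$,
\begin{align*}
\sum_{i=0}^{n_0-1} \mathbbm{1}_{(|\varphi|>\varphi_0)}(T^ix) \leqslant n_0 \, s_0(n_0,\varphi_0) = n_0 s_0.
\end{align*}
The remaining point is the multiplicity factor $\lfloor \max\tau/\min\tau\rfloor$: a single grazing collision at $T^ix$ can be ``responsible'' for several near-misses, because the flow orbit may skim along near the same scatterer over several consecutive short free flights before or after that collision. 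Since each free flight has length at least $\min\tau$ and the geometry of a near-grazing passage near one scatterer extends over a total arc/chord length at most of order $\max\tau$ (a grazing trajectory near a strictly convex scatterer separates from it after bounded length), at most $\lfloor \max\tau/\min\tau\rfloor$ free-flight segments can be involved per grazing collision. Multiplying, the total number of near-misses is at most $\lfloor \max\tau/\min\tau\rfloor s_0 n_0$.

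The main obstacle I anticipate is making the geometric implication ``near-miss on a bounded chord $\Rightarrow$ an adjacent collision is $\varphi_0$-grazing'' quantitatively precise and uniform in $x$: one must use the lower curvature bound and the $\cC^3$ regularity of the scatterers to guarantee that a chord passing within $\ve$ of a strictly convex scatterer, but not meeting it, is close (in the $\cC^1$ sense, at its endpoints) to a genuine tangent line of that scatterer, and then translate tangency-closeness into an explicit bound $|\varphi| > \varphi_0(\ve)$ at the bracketing collisions, with $\varphi_0(\ve) \to \pi/2$. A secondary technical care is the bookkeeping near the endpoints of the time interval (the first and last free flights may be only partially traversed), which is why the final count in Proposition~\ref{lemma: singular points} carries the extra additive terms; within this lemma itself one simply states the bound for the full $n_0$ collisions and absorbs boundary effects into the constant when the lemma is applied.
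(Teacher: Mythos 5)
Your central geometric claim --- that a near-miss on the chord from $T^i x$ to $T^{i+1}x$ forces one of the two bracketing collisions to satisfy $|\varphi| > \varphi_0$ --- is false, and it is precisely the step you flag as the ``main obstacle''. The scatterer that the chord nearly grazes is in general a \emph{different} scatterer from the ones the orbit actually hits at $T^i x$ and $T^{i+1}x$: a particle can leave one scatterer perpendicularly ($\varphi = 0$), skim within distance $\ve$ of a second scatterer in mid-flight, and land head-on ($\varphi = 0$) on a third. The convexity and curvature bound of the nearly-grazed scatterer impose no constraint on the reflection angles at the scatterers that are actually hit, so a near-miss cannot be charged to a $\varphi_0$-grazing collision of the orbit of $x$ itself; an orbit with no $\varphi_0$-grazing collisions at all can still have near-misses. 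Your inequality $\sum_{i}\mathbbm{1}_{(|\varphi|>\varphi_0)}(T^i x)\leqslant n_0 s_0$ is correct but bounds the wrong quantity.

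The mechanism you need, and the one the paper uses, is a shadowing argument: if the orbit of $x$ passes within $\ve$ of a scatterer without colliding, then by uniform continuity of the flow on a compact time interval there is a nearby point $y$ whose orbit makes an \emph{actual} grazing collision at that place, and the bound $s_0$ is applied to the collision sequence of $y$, not of $x$. The factor $\lfloor \max\tau/\min\tau\rfloor$ arises because over the time span $\sum_{i<n_0}\tau(T^i x)\leqslant n_0\max\tau$ the shadowing orbit may make up to $\lfloor n_0\max\tau/\min\tau\rfloor$ collisions, of which at most a fraction $s_0$ are grazing --- not, as you suggest, because one grazing collision is responsible for several near-misses. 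The delicate point, which occupies most of the paper's proof, is that when $x$ has two or more near-misses one must produce a \emph{single} shadowing point $y$ accounting for all of them simultaneously; this forces $y$ into the finite set $R_2$ of orbits with multiple grazing collisions (corner points of elements of $\cM_0^{t_0}$), and the constants $\ve_2,\delta_2,\ve_1,\delta_1,\delta_0$ are chosen by compactness and uniform continuity to make that selection possible. As written, your argument has a genuine gap that cannot be repaired without switching to this perturbative viewpoint.
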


\begin{proof}[Proof of Lemma~\ref{sublemma:near_miss}]
Denote $t_0 \coloneqq \lfloor n_0 \max \tau / \min \tau \rfloor $. First, note that $(x,t) \mapsto \Phi_t(x)$ is continuous on the compact set $\Omega \times [ - t_0 \max \tau , t_0 \max \tau ]$ and, therefore, is uniformly continuous on this same set. The idea of the proof is to find for all $x$ a point $y$ in a neighbourhood such that the flow orbit of $y$ makes a grazing collision on the scatterers getting close to the trajectory of $x$.  We start by considering the points making multiple grazing collisions. Let
\begin{align*}
R_2 \coloneqq \{ x \in M \mid (T^i x)_{0 \leqslant i \leqslant t_0} \text{ comprises at least two grazing collisions} \}. 
\end{align*}
Remark that each point of $R_2$ must be a corner of $\bar A$, for some $A \in \cM_0^{t_0}$. In particular $R_2$ is a finite set. Informally, define $\ve_2$ to be the minimal distance (in $\Omega$) between the scatterers and the segment $[ T^i x, T^{i+1}x ] \subset \Omega $, $x \in R_2$ and $0 \leqslant i < t_0 $. More precisely, define $\pi : \Omega \to Q$ by $\pi (x) = \pi (\bar x,v)= \bar x$ where $x=(\bar x,v) \in \Omega = Q \times \mathbbm{S}^1 / \sim$, and
\begin{align*}
\tilde d([x,Tx], \partial Q) \coloneqq \inf \{ \delta > 0 \mid \partial Q \cap (\pi ([x,Tx]) \times [-\delta,\delta]) \text{ has $3$ or more connected components} \}
\end{align*}
Therefore, let 
\begin{align*}
\ve_2 \coloneqq \min_{x \in R_2} \min_{0 \leqslant i < t_0} \tilde d([T^i x,T^{i+1} x], \partial Q) >0,
\end{align*}
and let $\delta_2$ be the delta associate to $\ve_2/2$ by the uniform continuity of the flow.

We now consider the points making a single grazing collision: let
\begin{align*}
R_1 \coloneqq \{ x \in M \mid (T^i x)_{0 \leqslant i \leqslant t_0} \text{ comprises exactly one grazing collisions} \} = \cS_{t_0} \smallsetminus R_2. 
\end{align*}
Then $R_1' \coloneqq \cS_{t_0} \smallsetminus \cN_{\delta_2/2}(R_2)$ is compact. Let
\begin{align*}
\ve_1 \coloneqq \min_{x \in R_1'} \min_{0 \leqslant i < t_0} \tilde d([T^i x,T^{i+1} x], \partial Q).
\end{align*}
By the uniform continuity of the flow, it follows that $\ve_1 > 0$. Let $\delta_1> 0$ be the delta associated with $\min( \ve_2/2, \ve_1/2, \delta_2/2, \diam R_2 /4)$ by the uniform continuity of the flow. Up to decreasing the value of $\delta_1$, we can assume that $\delta_1 < \min(\ve_1 /2,\ve_2/2)$. Finally, let $\delta_0$ be the delta associated to $\delta_1$ by the uniform continuity of the flow. We can assume that $\delta_0 < \delta_1$.

We claim that any $\ve > 0$ such that $\ve < \delta_0$ has the desired property. Indeed, assume that for some fixed $x \in M$ there exist two or more distinct $0 \leqslant i_j < n_0$ such that $\tilde d([T^{i_j} x,T^{i_j+1} x], \partial Q) < \ve$. Therefore, to each $i_j$ is associated a point $x_j \in M$, in a $\delta_1$-neighbourhood of $x$, such that the flow trajectories of $x_j$ and $x$ stay at distance at most $\min( \ve_2/2, \ve_1/2, \delta_2/2, \diam R_2 /4)$ for all times $t \in [0, n_0 \max \tau]$. By this shadowing, we get that the orbit of $x_j$ passes at distance less than $\delta_0 + \ve_1/2 < \ve_1$ to $\partial Q$ (corresponding to the in-between collisions associated with $i_k$, $k \neq j$). In particular, by the definition of $\ve_1$, we get that $x_j \notin R_1'$. Thus $d_M(x_j,R_2) < \delta_2/2$ for all $j$. Since $d_M(x,x_j) < \delta_1 \leqslant \diam R_2 /4 $, there exists a unique $y \in R_2$ such that $d_M(x_j,y) < \delta_2 /2$. Therefore $d(x,y) < \delta_2$. Thus, the flow orbits of $x$ and $y$ stay at distance less than $\ve_2/2$ to each other for times $t \in [0 , \sum_{i=0}^{n_0-1} \tau(T^ix)]$. Thus, the flow orbit of $y$ passes at distance less than $\ve + \ve_2 /2 < \ve_2$ to $\partial Q$ when the orbit of $x$ passes at distance less than $\ve$. Therefore, by the definition of $\ve_2$, the flow orbit of $y$ makes a grazing collision at this instants. However, the flow orbit of $y$, for $t \in [0, n_0 \max \tau]$ makes at most $\lfloor \max \tau / \min \tau \rfloor n_0$ collisions, and thus at most $\lfloor \max \tau / \min \tau \rfloor s_0 n_0$ grazing collisions. Taking $\tilde \ve = \delta_0$ finishes the proof.
\end{proof}

We have now all the tools in order to prove Proposition~\ref{lemma: singular points}.
\begin{proof}[Proof of Proposition~\ref{lemma: singular points}]
Let $0 < \ve < \ve_0$ be small enough so that $\sin \ve \leqslant \tilde \ve / \max \tau$ (where $\tilde \ve$ comes from Lemma~\ref{sublemma:near_miss}),
\begin{align*}
A \cap \cN_\ve(A \cap (\cS_{-1} \cup \cS_1)) \subset \bigcup_{-1 \leqslant i \leqslant 1} T^{i} \{ |\varphi| > \varphi_0 \} , \quad \forall A \in \cP,
\end{align*}
and $B(x,\ve)$ intersects at most $2K$ elements of $\cM_{-1}^1$ for all $x \in M$.

Let $x \in M$. Then $B(x,\ve,n)$ might intersects several elements of $\cM_{-1}^n = \hP_0^{n-1}$. We start by estimate the number of those sets. 

Assume first that $n=n_0$. For any $A \in \cP_0^{n-1}$, there exist $A_i \in \cP$ such that $A = \bigcap_{i=0}^{n-1} T^{-i} A_i$. Thus $A \cap B(x,\ve,n) \neq \emptyset$ if and only if $A_i \cap B(T^{-i}x,\ve) \neq \emptyset$ for all $0 \leqslant i < n$. In order to count the number of admissible $A_i$, we distinguish three mutually exclusive cases:

\noindent
i) $B(y,\ve) \cap (\cS_{-1} \cup \cS_1) = \emptyset$. Therefore, $B(y,\ve)$ is contained is a single element of $\cP$.

\noindent
ii) $y \in A \cap \cN_\ve(A \cap (\cS_{-1} \cup \cS_1))$ for some $A \in \cP$. Therefore at least one of $y$, $T y$ or $T^{-1}y$ is a $\varphi_0$-grazing collision, and $B(y,\ve)$ intersects at most $2K$ elements of $\cP$. This case occurs at most $3 s_0 n_0$ times.

\noindent
iii) Otherwise, at least one of $[y,Ty]$ or $[T^{-1}y,y]$ passes $\ve$ close to the scatterers without making a collision. Furthermore, $B(y,\ve)$ intersects at most $2K$ elements of $\cP$. By the Sublemma~\ref{sublemma:near_miss}, this case occurs at most $2 \lfloor \max \tau / \min \tau \rfloor s_0 n_0$ times.

Combining the three above situations, we obtain  
\begin{align*}
\#\{ A \in \cP_0^{n_0-1} \mid A \cap B(x,\ve,n_0) \neq \emptyset \} \leqslant (2K)^{\left( 3+ 2\left\lfloor \frac{\max \tau}{\min \tau} \right\rfloor \right) s_0 n_0}.
\end{align*}

Now, for general $n \geqslant n_0$, write $n=kn_0 +\ell$, $0 \leqslant \ell < n_0$. Then, any $A \in \cP_0^{n-1}$ can be written $A = \bigcap_{i=0}^{k-1} T^{in_0}A_i \cap B$, where $A_i \in \cP_0^{n_0-1}$ and $B \in \cP_0^{\ell -1}$. Thus, $A \cap B(x,\ve,n) \neq \emptyset$ if and only if $A_i \cap B(T^{in_0}x,\ve,n_0) \neq \emptyset$ for all $i$ and $B \cap B(T^{kn_0}x,\ve,\ell) \neq \emptyset$. Therefore, by the computation done in the case $n=n_0$, and since $\cP_0^{n-1} = \cM_{-1}^n$, we obtain
\begin{align*}
\#\{ A \in \cM_0^{n} \mid A \cap B(x,\ve,n) \neq \emptyset \} \leqslant \# \cM_{-1}^0 \left( (2K)^{\left( 3+ 2\left\lfloor \frac{\max \tau}{\min \tau} \right\rfloor \right) s_0 n_0} \right)^{\left\lceil \frac{n}{n_0} \right\rceil }.
\end{align*}
For each $A \in \cM_0^{n-1}$, let $F_A \subset A$ be $(\delta/2,n)$-separated with maximal cardinality if $A \cap B(x,\ve,n) \neq \emptyset$, and $F_A= \emptyset$ otherwise. Thus, the union of the $F_A$ is $(\delta,n)$-spanning $B(x,\ve,n)$. Therefore, using that 
\begin{align*}
\#\{ A \in \cM_0^{n-1} \mid A \cap B(x,\ve,n) \neq \emptyset \} \leqslant \#\{ A \in \cM_0^{n} \mid A \cap B(x,\ve,n) \neq \emptyset \},
\end{align*}
and estimates on each $\# F_A$ obtained in the proof of Proposition~\ref{lemma: regular points} yield
\begin{align*}
r(n,\delta,B(x, & \ve,n)) \leqslant \sum_{A \in \cM_0^{n-1}} \# F_A \\
&\leqslant (\# \cM_{-1}^0)^2 \# \cP  \exp \left( - 2\log \# \cP \left\lceil \frac{\log c_1 C_1^{-1} \delta}{\log \Lambda} \right\rceil  \right) \left( (2K)^{\left( 3+ 2\left\lfloor \frac{\max \tau}{\min \tau} \right\rfloor \right) s_0 n_0} \right)^{\left\lceil \frac{n}{n_0} \right\rceil }.
\end{align*}
Taking the $\log$ of the above estimate ends the proof.
\end{proof}

\begin{corollary}\label{prop: top tail entropy bound}
For any finite horizon billiard table, $h^*(T) \leqslant \left( 3+ 2\left\lfloor \frac{\max \tau}{\min \tau} \right\rfloor \right) s_0 \log 2 K$.
\end{corollary}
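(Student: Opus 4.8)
The plan is to combine the two regimes already handled and then unwind the three successive limits in the definition~\eqref{eq:topo_tail_entropy} of $h^*(T)$. Recall that $M = \cS_{n-1} \sqcup (M \smallsetminus \cS_{n-1})$, that Proposition~\ref{lemma: regular points} bounds $\log r(n,\delta,B(x,\ve,n))$ for $x \in M \smallsetminus \cS_{n-1}$, and that Proposition~\ref{lemma: singular points} bounds it for $x \in \cS_{n-1}$ (in fact for every $x \in M$), the latter bound dominating the former. So it suffices to feed these estimates into~\eqref{eq:topo_tail_entropy}.

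First I would fix a pair $(n_0,\varphi_0)$, with $n_0 \geqslant 1$ and $0 < \varphi_0 < \pi/2$, and then choose $\ve < \ve_0$ small enough that both propositions apply --- in particular small enough for the smallness conditions in Proposition~\ref{lemma: singular points} (namely $\sin\ve \leqslant \tilde\ve(n_0)/\max\tau$ with $\tilde\ve$ from Lemma~\ref{sublemma:near_miss}, that $B(x,\ve)$ meets at most $2K$ elements of $\cM_{-1}^1$, and the inclusion condition on $A \cap \cN_\ve(A \cap (\cS_{-1}\cup\cS_1))$). Then, for all $\delta > 0$ and all $n \geqslant n_0$,
\[
\sup_{x \in M} \log r(n,\delta,B(x,\ve,n)) \leqslant 2\,\frac{\log \#\cP}{\log\Lambda}\,\log\delta^{-1} + C + \left(3 + 2\left\lfloor\frac{\max\tau}{\min\tau}\right\rfloor\right)(n+n_0)\, s_0(n_0,\varphi_0)\,\log 2K ,
\]
where $s_0(n_0,\varphi_0)$ is the quantity appearing in Proposition~\ref{lemma: singular points}.

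Next, dividing by $n$ and letting $n \to \infty$ with $\delta$, $n_0$, $\varphi_0$ fixed: the first two summands are $O(1/n)$ and vanish, while $(n+n_0)/n \to 1$, so
\[
\lim_{\delta \to 0}\ \limsup_{n \to \infty}\ \frac{1}{n}\ \sup_{x \in M}\ \log r(n,\delta,B(x,\ve,n)) \leqslant \left(3 + 2\left\lfloor\frac{\max\tau}{\min\tau}\right\rfloor\right) s_0(n_0,\varphi_0)\,\log 2K ,
\]
the right-hand side being independent of both $\delta$ and $\ve$; hence the outer limit $\ve \to 0$ leaves it unchanged and $h^*(T) \leqslant \left(3 + 2\left\lfloor\frac{\max\tau}{\min\tau}\right\rfloor\right) s_0(n_0,\varphi_0)\log 2K$ for every such pair $(n_0,\varphi_0)$. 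Passing to the infimum over $(n_0,\varphi_0)$ and recalling that $s_0 = \inf_{n_0,\varphi_0} s_0(n_0,\varphi_0)$ gives the claimed bound.

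There is no genuinely difficult step here: the analytic content is already contained in Propositions~\ref{lemma: regular points} and~\ref{lemma: singular points} and in Lemma~\ref{sublemma:near_miss}. The only point demanding a little care is the order of the quantifiers --- $\ve$ has to be chosen after $n_0$ since $\tilde\ve$ depends on $n_0$ --- which is legitimate precisely because $\ve \to 0$ is the outermost limit in~\eqref{eq:topo_tail_entropy}: for each fixed $(n_0,\varphi_0)$ one may restrict attention to $\ve$ satisfying all the required smallness conditions, and the optimization over $(n_0,\varphi_0)$ is carried out only at the very end.
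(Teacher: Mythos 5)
Your proposal is correct and follows essentially the same route as the paper: combine Propositions~\ref{lemma: regular points} and~\ref{lemma: singular points} (the latter dominating, and valid for all $x\in M$), divide by $n$, and take the limits in $n$, $\delta$, $\ve$ in the order prescribed by \eqref{eq:topo_tail_entropy}. Your extra care about the quantifier order (choosing $\ve$ after $(n_0,\varphi_0)$, legitimate because $\ve\to 0$ is the outermost limit and $r$ is monotone in $\ve$) and the final infimum over $(n_0,\varphi_0)$ only makes explicit what the paper leaves implicit in ``taking the appropriate limits.''
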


\begin{proof}
By Propositions~\ref{lemma: regular points} and \ref{lemma: singular points}, we get that 
\begin{align*}
\limsup_{n \to \infty} \frac{1}{n} \log \sup_{x \in M} r(n,\delta,B(x,\ve,n)) \leqslant \left( 3+ 2\left\lfloor \frac{\max \tau}{\min \tau} \right\rfloor \right) s_0 \log 2 K.
\end{align*}
Taking the appropriate limits in $\delta$ and in $\ve$ yields the result.
\end{proof}

\section{Examples}\label{sect: examples}

Denote by $Q^{(d)}$ and $Q^{(R,R')}$ the billiard tables with parameters $d$ and $(R,R')$ respectively, as depicted in Figure~\ref{fig:tables}. These billiard tables have finite horizons and scatterers do not intersects whenever
\begin{align*}
d \in \, &\cD_1 \coloneqq (2, 4/\sqrt{3}), \\
(R,R') \in \, &\cD_2 \coloneqq \{ (r,r') \in \bR^2 \mid  0 < r < r' < \frac{1}{2} , \text{ and } \frac{1}{2} < r + r' < \frac{\sqrt{2}}{2} , \text{ and } r' > \frac{\sqrt{2}}{4} \}.
\end{align*}
Relying on numerical computations performed in respectively \cite{Baras95} and \cite{Garrido97}, Baladi and Demers proved that $Q^{(d)}$ and $Q^{(R,0.4)}$ satisfy the sparse recurrence condition for each of the finitely many values $d \in \cD_1$ and $(R,0.4) \in \cD_2$ for which the computations have been performed.

In this section, we prove that there is an open set of parameters $d$ (resp $R$) for which $Q^{(d)}$ (resp $Q^{(R,0.4)}$) satisfies the sparse recurrence condition. This is the content of Proposition\ref{prop: open sparse rec}. We also prove that for most of these parameters, the associated singularities are negligible. This is the content of Proposition~\ref{prop: Gdelta set of param}.

\begin{figure}[ht]
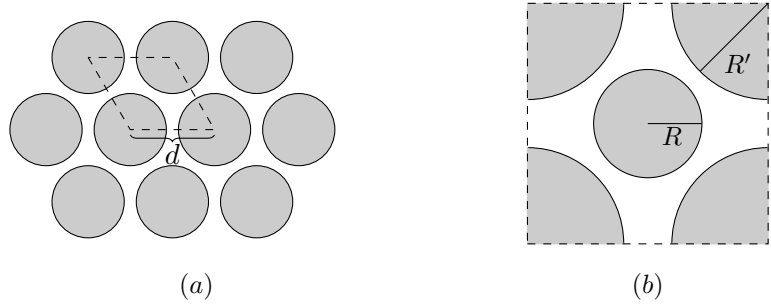

\tikz[x=8mm,y=8mm]
{
 \filldraw[fill=black!20!white, draw=black] (4.7,.7) circle (.6);
\filldraw[fill=black!20!white, draw=black] (6.1,.7) circle (.6);
\filldraw[fill=black!20!white, draw=black] (7.5,.7) circle (.6);
\filldraw[fill=black!20!white, draw=black] (4,1.9) circle (.6);
\filldraw[fill=black!20!white, draw=black] (5.4,1.9) circle (.6);
 \filldraw[fill=black!20!white, draw=black] (6.8,1.9) circle (.6);
  \filldraw[fill=black!20!white, draw=black] (8.2,1.9) circle (.6); 
\filldraw[fill=black!20!white, draw=black] (4.7,3.1) circle (.6);
\filldraw[fill=black!20!white, draw=black] (6.1,3.1) circle (.6);
\filldraw[fill=black!20!white, draw=black] (7.5,3.1) circle (.6);

\draw[dashed] (4.7,3.1) -- (5.4,1.9) -- (6.8,1.9) -- (6.1,3.1) -- cycle;

\draw[decoration={brace,mirror,raise=2pt},decorate]
  (5.4,1.9) -- node[below=1.6pt] {$d$} (6.8,1.9);

\node at (6.5,-.7){\small$(a)$};

\filldraw[fill=black!20!white, draw=black]  (13.6,0) arc (0:90:1.6);
\filldraw[fill=black!20!white, draw=black!20!white]  (13.6,0) -- (12,1.6) -- (12,0) -- cycle;
\filldraw[fill=black!20!white, draw=black]  (16,1.6) arc (90:180:1.6);
\filldraw[fill=black!20!white, draw=black!20!white]  (16,1.6) -- (14.4,0) -- (16,0) -- cycle;

\filldraw[fill=black!20!white, draw=black]  (14.4,4) arc (180:270:1.6);
\filldraw[fill=black!20!white, draw=black!20!white]  (14.4,4) -- (16,2.4) -- (16,4) -- cycle;

\filldraw[fill=black!20!white, draw=black]  (12,2.4) arc (270:360:1.6);
\filldraw[fill=black!20!white, draw=black!20!white]  (12,2.4) -- (13.6,4) -- (12,4) -- cycle;

\draw[dashed] (12,0) rectangle (16,4) ;

\filldraw[fill=black!20!white, draw=black]  (14,2) circle (.9);
\draw (14,2) -- (14.9,2);
\node at (14.4,1.8){\small $R$};

\draw (16,4) -- (14.87,2.87);
\node at (15.5, 3) {\small$R'$};

\node at (14,-.7){\small$(b)$}; 

}
\caption{(a) The Sinai billiard on a hexagonal lattice studied in \cite{Baras95}, scatterer of radius 1, and distance $d$ between the centers of adjacent scatterers. (b) The Sinai billiard on a square lattice with scatterers of radius $R < R'$ studied in \cite{Garrido97}. The boundary of a single cell is indicated by dashed lines in both tables. (Figure taken from \cite[Figure~2]{BD2020MME}.)}
\label{fig:tables}
\end{figure}

\begin{proposition}\label{prop: open sparse rec}
For any $d \in (2.15 , 4/\sqrt{3})$ (resp. $R \in (0.1 , 0.25)$), the table $Q^{(d)}$ (resp. $Q^{(R,0.4)}$) has the sparse recurrence property.
\end{proposition}

\begin{proof}
Since $h_* \geqslant h_{\musrb}(T)$, It is sufficient to provide a lower bound on $h_{\musrb}(T)$ larger than $s_0 \log 2$. From \cite[Equations (3.49),(3.50) and (3.33)]{chernov2006chaotic}, we get that 
\begin{align}\label{eq: lower bound srb}
h_{\musrb}(T) \geqslant \frac{1}{2} \int_{- \frac{\pi}{2}}^{\frac{\pi}{2}} \log \left( 1 + \tmin \frac{2 \kappa_{\min}}{\cos \varphi} \right) \cos \varphi \intd \varphi,
\end{align}
where $\kappa_{\min}$ denotes the minimal curvature of the scatterers.

For the family of billiard tables studied in \cite{Baras95}, we get that $\kappa_{\min}=1$ and $\tmin = d -2$. By differentiating, it follows that the right-hand side of \eqref{eq: lower bound srb} is an increasing function of the variable $d$. Now, using numerical computations, we get that for $d_0=2.15$,
\begin{align*}
\frac{1}{2}\int_{- \frac{\pi}{2}}^{\frac{\pi}{2}} \log \left( 1 + \frac{2(d_0-2)}{\cos \varphi} \right) \cos \varphi \intd \varphi > 0.36 > \frac{1}{2}\log 2 \geqslant s_0 \log 2.
\end{align*}
Hence, for all $d \in (2.15, 4/\sqrt{3})$, $Q^{(d)}$ has the sparse recurrence property.

For the family of billiard table studied in \cite{Garrido97}, we get that $\kappa_{\min} = \frac{1}{R'} = 2.5$ and
\begin{align*}
\tmin = \begin{cases}
1-2R' &\quad \text{ if  } 0.1 < R \leqslant \sqrt{2}/2 - (1-2R'), \\
\sqrt{2}/2 - (R+R') &\quad \text{ if  }  \sqrt{2}/2 - (1-2R') \leqslant R < \sqrt{2}/2 - R'.
\end{cases}
\end{align*}
Fixing $R'=0.4$, we get that the right-hand side of \eqref{eq: lower bound srb} is a decreasing function of the variable $R$. Now, using numerical computations, we get that for $R_0=1/4$
\begin{align*}
\frac{1}{2}\int_{- \frac{\pi}{2}}^{\frac{\pi}{2}} \log \left( 1 + \frac{5(\sqrt{2}/2 - R' - R_0 )}{\cos \varphi} \right) \cos \varphi \intd \varphi > 0.347 > \frac{1}{2}\log 2 \geqslant s_0 \log 2.
\end{align*}
Hence, for all $R \in (0.1 , 1/4)$, $Q^{(R,0.4)}$ has the sparse recurrence property.
\end{proof}

\begin{proposition}\label{prop: Gdelta set of param}
There exists a dense $G_\delta$ set $G_1 \subset \cD_1$ (resp. $G_2 \subset (0.1, \frac{\sqrt{2}}{2}-0.4)$) such that if $d \in G$ (resp. $R \in G_2$), then $Q^{(d)}$ (resp. $Q^{(R,0.4)}$) has negligible singularities.
\end{proposition}

\begin{proof}
The proof is very similar to the one of Proposition~\ref{prop: U_k are open and dense}. For completeness, we provide the modifications to be done. Denote
\begin{align*}
U_k \coloneqq \{ d \in \cD_1 \mid \bigcup_{i=3}^{k} \Fix \, T_d^i \cap \cS_0 = \emptyset \}, \, k \geqslant 3 ,
\end{align*}
where $T_d$ is the collision map associated to $Q^{(d)}$. Note that for $d_1, d_2 \in \cD_1$, 
\begin{align}\label{eq: equiv metric}
d_{C^3}(Q^{(d_1)},Q^{(d_2)}) = |d_1 - d_2|.
\end{align}
To prove that $U_k$ is open, let $d_0 \in U_k$ and $Q_0 = Q^{(d_0)}$. By contradiction, assume that for all $n \geqslant 1$, there exists $d_n \in \cD_1$ such that $|d_0 - d_n| < 1/n$. Denote $Q_n = Q^{(d_n)}$. The rest of the proof of openness is exactly as in the proof of Proposition~\ref{prop: U_k are open and dense}.

To prove that $U_k$ is dense in $\cD_1$, let $d_0 \in \cD_1$ and $\delta > 0$. The proof is then as in the one of Proposition~\ref{prop: U_k are open and dense} where the successive perturbed tables $Q_n = Q^{(d_n)}$ are obtained by constructing an increasing sequence $(d_n)_{n\geqslant 1}$ bounded from above by $d_0 + \delta$. The construction of this sequence uses the same argument as if all of the orbit of $x_n$ were composed of points of the type $y_i$ (and none of the type $z_i$), as well as the equivalence of metrics \eqref{eq: equiv metric}.

Finally, set $G_1 = \bigcap_{k \geqslant 3} U_k$. The construction of $G_2$ is analogous (where instead of increasing values of the parameter $d$, we use decreasing values of $R$).
\end{proof}

\bibliography{biblio}{}
\bibliographystyle{abbrv}

\end{document}